
\documentclass{siamltex}


\usepackage{graphicx}

\usepackage{amssymb}
\usepackage{amsmath}
\usepackage{bm}




\def\og{\leavevmode\raise.3ex\hbox{$\scriptscriptstyle\langle\!\langle$~}}
\def\fg{\leavevmode\raise.3ex\hbox{~$\!\scriptscriptstyle\,\rangle\!\rangle$}}

\newcommand{\beqa}{\begin{eqnarray}}
\newcommand{\eeqa}[1]{\label{#1}\end{eqnarray}}
\newcommand{\beq}{\begin{equation}}
\newcommand{\eeq}[1]{\label{#1}\end{equation}}



\newcommand\ep{\varepsilon}

\newcommand\norm[2]{\left\Vert #1 \right\Vert_{#2}}
\newcommand\twoscale{\overset{_2}{\rightharpoonup}}

\newcommand{\spec}[1]{\sigma{\left( #1 \right)}}

\newcommand\dblint[2]{\int_{#1}{\int_{Q}{#2}  \ \mathrm{d}y} \mathrm{d}x}

\def\phi{\varphi}

\def\demifleche{\rightharpoonup}


\def\*fleche{\buildrel *\over\demifleche}

\def\tol2{\buildrel\hbox{$L^2$}\over\longrightarrow}

\def\toto{\leaders\hbox to 5mm{\hfil.\hfil}\hfill}




\title{Spectral analysis of one-dimensional high-contrast elliptic problems 
with periodic coefficients} 


\author{K. D. Cherednichenko\thanks{School of Mathematics,Cardiff University, Senghennydd Road, Cardiff, CF24 4AG, 
United Kingdom ({\tt cherednichenkokd@cardiff.ac.uk})} \and S. 
Cooper\thanks{School of Mathematics,Cardiff University, Senghennydd Road, Cardiff, CF24 4AG, 
United Kingdom} \and S. Guenneau\thanks{Institut Fresnel, Aix-Marseille Universit\'{e} (UMR CNRS 7249), Domaine Universitaire de Saint-J\'er\^ome F13397, Marseille cedex 20, France}}


\medskip

\begin{document}

\maketitle

\begin{abstract}
We study the behaviour of the spectrum of a family of one-dimensional operators with 
periodic high-contrast coefficients as the period goes to zero, which may represent 
{\it e.g.}  the elastic or electromagnetic response of a two-component composite medium. Compared to the standard 
operators with moderate contrast, they exhibit a number of new effects due to the 
underlying non-uniform ellipticity of the family. The effective behaviour of such media in the 
vanishing period limit also differs notably from that of multi-dimensional models 
investigated thus far by other authors, due to the fact that neither component of 
the composite forms a connected set. We then discuss a modified problem, where the equation 
coefficient is set to a positive constant on an interval that is independent of the period. 
Formal asymptotic analysis and numerical tests with finite elements suggest the existence of localised eigenfunctions (``defect modes''), whose eigenvalues situated in the gaps of the limit spectrum for the 
unperturbed problem.

\end{abstract}



\begin{keywords} 
Elliptic differential equations, homogenisation, spectrum
\end{keywords}

\begin{AMS}
35J70, 35B27, 35P99
\end{AMS}

\pagestyle{myheadings}
\thispagestyle{plain}
\markboth{K. D. CHEREDNICHENKO, S. COOPER and S. GUENNEAU}{SPECTRAL ANALYSIS OF HIGH-CONTRAST PROBLEMS}

\section{Introduction}

\subsection{The general context for the problem in hand}

The description of the effective behaviour of high-contrast composites ("high-contrast homogenisation") has been of particular interest in the analysis and applied communities over the last decade.  The analytical part of the related literature starts with the work \cite{Zh2000}, which developed in detail some earlier ideas of 
\cite{Allaire} concerning the use of ``two-scale convergence'' for the analysis of the limit behaviour of the 
boundary-value problem
\[
-{\rm div}({\mathcal A}^\varepsilon(x/\varepsilon)\nabla u)=f,\ \ \ \ f\in L^2(\Omega),\ \ \ u\in H_0^1(\Omega),\ \ \ {\mathcal A}^\varepsilon=\varepsilon^2\chi_0I+\chi_1I,\ \ \varepsilon>0,
\]
where $\Omega\subset{\mathbb R}^n$ is a bounded domain, and $\chi_0,$ $\chi_1$ are the indicator 
functions of $[0,1)^n$-periodic sets in ${\mathbb R}^n$ such that $\chi_0+\chi_1=1.$
 
 Several contributions to the high-contrast homogenisation followed: in the linear and non-linear, scalar and vector 
contexts, with various sets of assumptions about the underlying geometry of the composite.  With applications 
mainly in solid mechanics and electromagnetism, high-contrast media have served as a theoretical ground for a number of effects observed in physics experiments, in particular those related to photonic band-gap materials and cloaking metamaterials (\cite{RamakrishnaGrzegorczyk}). The range of techniques developed in these contexts and their applications continue their rapid expansion, and the present paper is one contribution aimed at addressing some aspects that have thus far been left out of the scope of the related research. 

More specifically, we approach the question of the analysis of the spectral behaviour of high-contrast composites in the case when the component represented by the  function $\chi_1$ (the "matrix" of the composite) is disconnected in ${\mathbb R}^n.$ Clearly, this is always the case in one dimension ($n=1$), which is the situation we study in the present article.

\subsection{Problem setup}\label{setupsubsection}

We consider solutions $u$ to the following family of elliptic problems on an interval 
$(a,b)\subset{\mathbb R}:$  
\beq
{A}^\varepsilon u-\lambda u=f,\ \ \ f\in L^2(a,b),\ \ \ \varepsilon>0,\ \ \lambda\in{\mathbb C},
\eeq{ourproblem}
where the operators ${A}^\varepsilon$ are given by the closed bilinear form
\beq
({A}^\varepsilon u,v)=\int_a^bp(x/\varepsilon)\bigl(\varepsilon^2\chi_0(x/\varepsilon)+\chi_1(x/\varepsilon)\bigr)u'(x)\overline{v'(x)}dx,\ \ \ u,v\in {\mathfrak H}.
\eeq{bilinearform}
Here $p=p(y)>0$ is a $1$-periodic function in ${\mathbb R}$ such that $p,p^{-1}\in L^\infty(0,1),$ the functions $\chi_0$ and $\chi_1$ are the indicator functions of 1-periodic open sets $F_0$ and $F_1$ such that 
$\overline{F}_0\cup\overline{F}_1={\mathbb R},$ and ${\mathfrak H}$ denotes a closed linear subspace of $H^1(a,b)$ that contains $C_0^\infty(a,b).$
We make no assumptions regarding boundedness of the interval $(a,b),$ in particular it may coincide with 
the whole space ${\mathbb R}.$

In applied contexts the problem (\ref{ourproblem}) corresponds to, {\it e.g.}, the study of wave propagation in a layered 2D or 3D composite 
structure where $f=0,$ $\lambda>0.$ In what follows we study the spectrum $S^\varepsilon$ of the problem (\ref{ourproblem}), {\it i.e.} the set of 
values of $\lambda$ for which ${A}^\varepsilon-\lambda I$ does not have a bounded inverse in $L^2(a,b).$ Throughout the article we employ the notation $\sigma(A)$ for the spectrum of an operator $A,$ and the notation $Q$ for the ``unit cell'' $[0,1)$ 
whenever we describe the behaviour with respect to the ``physical'' variables $x,y.$ We continue writing $[0,1)$ for the 
``Floquet-Bloch dual'' cell when we refer to the domain of the quasimomentum $\theta.$

\subsection{Our strategy for the analysis of (\ref{ourproblem})}

It has been well understood in the existing literature on the subject (see \cite{AllaireConca}, \cite{Zh2000}, \cite{Zh2005}), that in the analysis of 
convergence of spectra of families of differential operators with periodic rapidly oscillating coefficient, one has to deal with two 
distinct issues: the lower 
semicontinuity of the spectra in the sense of Hausdorff convergence of sets, and the possibility of spectral pollution, the lack of which is often
referred to as ``spectral completeness''. The former issue, which in the wider spectral analytic context has been looked at from a more general perspective 
(see {\it e.g.} \cite{ChanWildeLindner}), is usually dealt with by proving first a variant of strong resolvent convergence. In the case of periodic operators involving multiple scales, one typically makes use of the so-called ``two-scale convergence'' 
(see {\it e.g.} \cite{Nguetseng}, \cite{Allaire}, \cite{Zh2000}). In the present paper we follow this general approach in proving the related 
lower semicontinuity statements both for the whole-space problem and for the problem in a bounded interval. It should be pointed out that 
this first part of the analysis of spectral convergence is not completely independent from the subsequent study of spectral completeness: 
unless some assumptions are made concerning the geometry of the periodic composite in question (see {\it e.g.} \cite{Zh2000}), one may not get 
the best possible ``lower bound'' for the limit spectrum. It has been noticed that, in order to capture the behaviour with respect to all Bloch components 
in the limit as $\varepsilon\to0,$ it is preferable to use an advanced, ``multi-cell'' version, of the standard two-scale convergence; 
see {\it e.g.} \cite{AllaireConca}, \cite[Chapter 5]{Cooper}, where this more refined approach is adopted. It is a version of this last, more 
detailed, procedure that we adopt in the present article.

    In the proof of spectral completeness, a natural strategy seems to try and analyse the relative strength of different Bloch components 
in a given (convergent) sequence of eigenfunctions. This idea has been elaborated in \cite{AllaireConca} in the specific context of 
``high-frequency'' homogenisation with the use of what the authors refer to as the ``Bloch measures''. A combination of a compactness argument in 
the related space of measures and a special ``slow-variable modulation'' construction then yields the simultaneous convergence of the given sequence to 
a limit eigenfunction and of the associated eigenvalues. In the present work we suggest an alternative approach (see Section \ref{boundedintervalsection}) 
to the convergence of 
eigenfunctions, which we believe is closer in spirit to the idea of ``spectral compactness'', {\it i.e.} compactness of eigenfunctions 
in a norm-preserving topology. Our approach is based on the idea that once one has control of the behaviour of eigenfunctions in 
the orthogonal complement to the space spanned by the limit eigenfunctions, one can immediately pass to the limit, as $\varepsilon\to0,$ in the 
weak formulation of the original family of eigenvalue problems. This idea allows us to cover the analysis of spectral convergence for 
a wide range of operator families, including those considered by \cite{AllaireConca},\cite{Zh2000}, \cite{Sm2009}, \cite[Chapter 4]{Cooper}.  

The key element in our analysis, which allows us to implement the above idea is Proposition \ref{prop0} below (see Section \ref{rigorousstatementsection}), or equivalently Proposition \ref{lem1}. These statements establish 
a uniform version of the Poincar\'{e}-type inequality between the projection of a given function onto the ``poorly behaving'' subspace and the $L^2$-norm of its derivative on the part of the domain where solutions of the eigenvalue problem can be shown to be {\it a priori} small as $\varepsilon\to0.$ Different versions of the same idea have appeared in a number of other contexts, serving a similar purpose of ``compensating'' somehow the apparent loss of compactness in the problem, for example, in the form of Korn inequality in elasticity (see {\it e.g.} \cite{DuvautLions}, and 
also \cite{Zh_sing} for its multiscale versions), in the form of the so-called ``energy method'' in classical homogenisation (see \cite{MuratTartar}),  and, more recently, in the form of a ``generalised Weyl decomposition'' for problems with degeneracies (see \cite{SmKam}). For nonlinear variants of the same idea, the reader may be referred to the ``geometric rigidity'' (see \cite{FJM}) and ``${\mathcal A}$-quasiconvexity'' (see \cite{Mulleretal}).

For an easier introduction to the problem, in what follows we start with the analysis of the problem (\ref{ourproblem}) in the whole-space 
case, $(a,b)={\mathbb R},$ see Section \ref{wholespacesection}. While a version of the the compactness argument developed in the 
bounded-interval setting (see Section \ref{boundedintervalsection}) applies here as well (once complemented by a suitable Weyl-sequence 
argument), we present a different argument, based on some ideas of \cite[Chapter 5]{Cooper}, where the spectral analysis is carried out 
in a more challenging setting of the Maxwell system.

Throughout the article we assume for simplicity that the restriction of $\chi_0$ to the periodicity cell $[0,1)$ is the
indicator function of an open interval $(\alpha,\beta),$ which we also denote by $Q_0.$ We use the notation $Q_1$ for 
the interior of the complement of $Q_0$ to the interval $(0,1).$

\section{Limit analysis for the whole space}\label{wholespacesection}

In this section we consider the case $(a,b)={\mathbb R}.$ One well-known procedure for calculating $S^\varepsilon$ is the Floquet-Bloch decomposition (\cite{BLP}) following the rescaling $y=x/\varepsilon.$ Then, for $\theta\in(0,1]$ the sequence of eigenvalues $\lambda=\lambda(\theta)$ corresponding to 
$\theta$-quasiperiodic solutions to the Floquet-Bloch problem on the interval $(0,1)$  associated to the differential expression 
$(p(\varepsilon^2\chi_0+\chi_1)u')'$ is obtained by solving the 
dispersion equation 
\[
\frac{1}{2}\biggl(\frac{1}{\varepsilon}+\varepsilon\biggr)\sin\Bigl(\varepsilon\sqrt{\lambda}(\alpha-\beta+1)\Bigr)
\sin\Bigl(\sqrt{\lambda}(\alpha-\beta)\Bigr)\ \ \ \ \ \ \ \ \ \ \ \ \ \ \ \ \ \ \ 
\]
\[
\ \ \ \ \ \ \ \ \ \ \ \ \ \ \ \ \ \ \  \ \ \ \ \ +\cos\Bigl(\varepsilon\sqrt{\lambda}(\alpha-\beta+1)\Bigr)\cos\Bigl(\sqrt{\lambda}(\alpha-\beta)\Bigr)=\cos(2\pi\theta).
\]
Passing to the limit in the above equation as $\varepsilon\to0$ yields
\beq
\frac{1}{2}(\alpha-\beta+1)\sqrt{\lambda}\sin\Bigl(\sqrt{\lambda}(\alpha-\beta)\Bigr)
+\cos\Bigl(\sqrt{\lambda}(\alpha-\beta)\Bigr)=\cos(2\pi\theta).
\eeq{limiteq}
By varying $\theta$ as indicated we obtain (for $\alpha=1/4,$ $\beta=3/4$) the set shown in Fig. \ref{limitspectrum}. 

\begin{center}
\begin{figure}[h]
\centering
\includegraphics[scale=0.3]{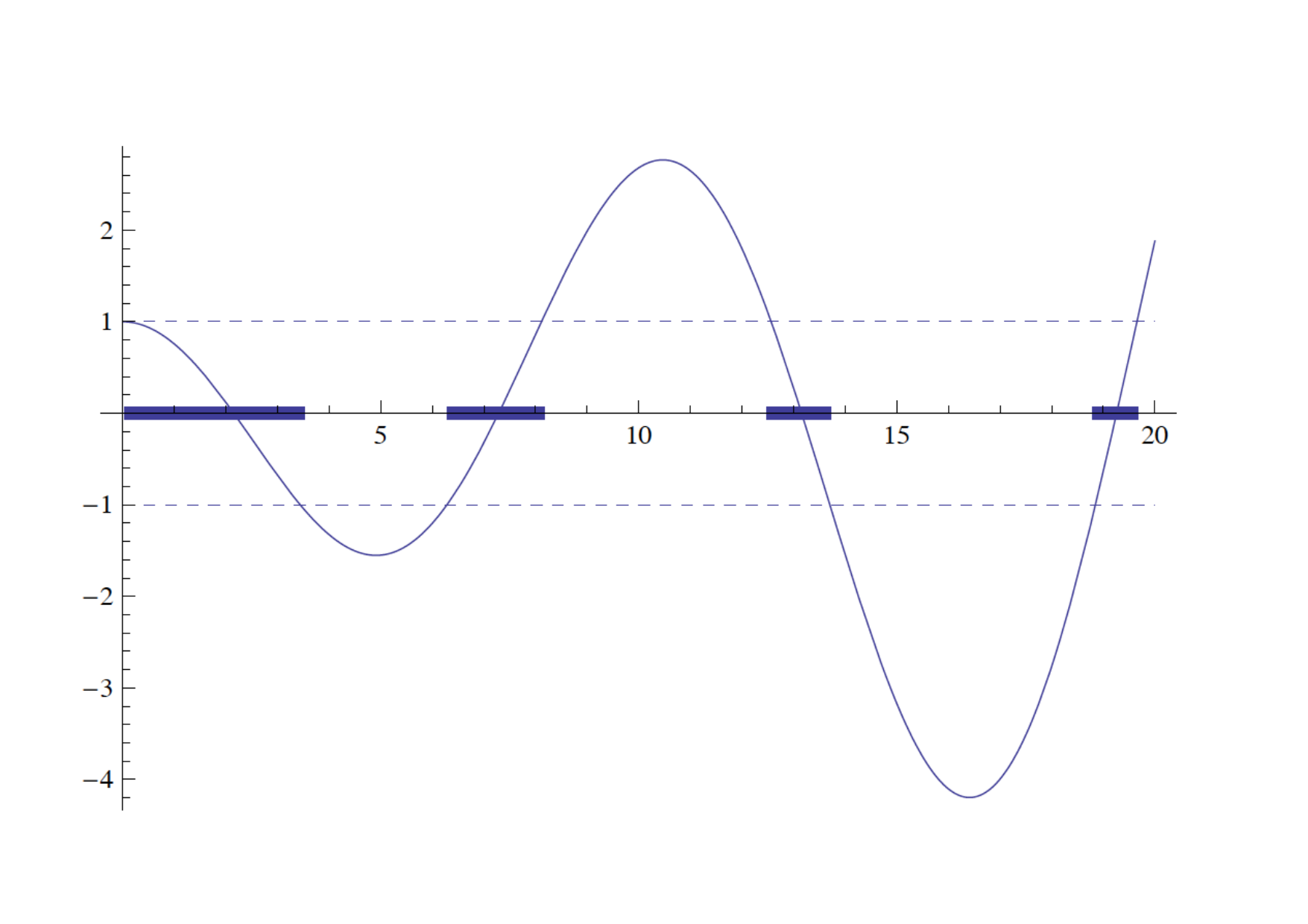}
\caption{The square root of the limit Bloch spectrum. The oscillating solid line is the graph of the function
$f(t)=\cos(t/2)-t\sin(t/2)/4,$ where $t$ represents $\sqrt{\lambda}$ in the formula (\ref{limiteq}) with $\alpha=1/4,$ 
$\beta=3/4.$ The square root of the spectrum is the union of the intervals indicated by bold lines. }
\label{limitspectrum} 
\end{figure}
\end{center}   

Our first result 
is the following theorem.
\begin{theorem}\label{maintheoremwholespace}
Let $(a,b)={\mathbb R}.$ Then the set $\lim_{\varepsilon\to0} S^\varepsilon$ is given by the union 
of solution sets for the equation (\ref{limiteq}) for all $\theta\in[0,1).$  
\end{theorem}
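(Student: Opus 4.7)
My plan is to exploit the explicit dispersion relation already derived in the preamble, which essentially reduces the theorem to a statement about the convergence of level sets of real-analytic functions. By the Floquet--Bloch decomposition for operators on $\mathbb{R}$ with $\varepsilon$-periodic coefficients, one has $S^\varepsilon=\bigcup_{\theta\in[0,1)}\sigma(A^\varepsilon_\theta),$ where $A^\varepsilon_\theta$ is the cell operator on $(0,1)$ (after rescaling $y=x/\varepsilon$) subject to $\theta$-quasi-periodic boundary conditions. Solving this ODE by the transfer-matrix method on the two sub-intervals $Q_0=(\alpha,\beta)$ and $Q_1$ yields the displayed $\varepsilon$-dispersion equation $D_\varepsilon(\lambda)=\cos(2\pi\theta),$ where $D_\varepsilon(\lambda)$ denotes its left-hand side. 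Letting $D_0(\lambda)$ denote the left-hand side of (\ref{limiteq}), the claim is that $\{\lambda\ge 0:D_\varepsilon(\lambda)\in[-1,1]\}$ converges in the Hausdorff sense to $\{\lambda\ge 0:D_0(\lambda)\in[-1,1]\}.$

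The first step is uniform convergence $D_\varepsilon\to D_0$ on compact subsets of $[0,\infty)$. Since $\tfrac{1}{2}\bigl(\varepsilon^{-1}+\varepsilon\bigr)\sin\bigl(\varepsilon\sqrt{\lambda}(\alpha-\beta+1)\bigr)=\tfrac{1}{2}\sqrt{\lambda}(\alpha-\beta+1)+O(\varepsilon^2\lambda^{3/2})$ and $\cos\bigl(\varepsilon\sqrt{\lambda}(\alpha-\beta+1)\bigr)=1+O(\varepsilon^2\lambda)$ uniformly in $\lambda$ on compacts, the desired uniform convergence follows by elementary Taylor expansion, with explicit error bounds polynomial in $\sqrt{\lambda}.$

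From this, the Hausdorff convergence of the two level sets on any bounded interval $[0,M]$ is straightforward. For the inclusion $\limsup_\varepsilon S^\varepsilon\subseteq S_0$: if $\lambda_\varepsilon\in S^\varepsilon\cap[0,M]$ and $\lambda_\varepsilon\to\lambda_0,$ then $|D_\varepsilon(\lambda_\varepsilon)|\le 1,$ and the joint continuity plus uniform convergence of $D_\varepsilon$ force $|D_0(\lambda_0)|\le 1,$ whence $\lambda_0\in S_0.$ For the reverse inclusion $\liminf_\varepsilon S^\varepsilon\supseteq S_0$: if $\lambda_0\in S_0$ lies in the interior of $S_0$ (so $|D_0(\lambda_0)|<1$), then by uniform convergence $|D_\varepsilon(\lambda_0)|<1$ for all sufficiently small $\varepsilon,$ so $\lambda_0\in S^\varepsilon$ directly. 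If $\lambda_0$ is a boundary point of a band, pick $\theta_0$ with $\cos(2\pi\theta_0)=D_0(\lambda_0)=\pm 1$ and exploit the real-analyticity (in particular, non-degenerate branch structure) of $D_\varepsilon$ around its extrema to select a convergent sequence $\lambda_\varepsilon\in S^\varepsilon$ approaching $\lambda_0$; alternatively, one approximates $\lambda_0$ from within the interior of $S_0$ and uses a diagonal extraction.

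The main obstacle is the last step above: near the band edges, where $D_0(\lambda)$ touches $\pm 1,$ the level-set description degenerates and one has to argue that the $\varepsilon$-level sets do not pinch off and fail to contain a neighbourhood of $\lambda_0.$ This is handled by noting that $D_\varepsilon(\lambda)$ is real-analytic and, on each monotonicity interval between critical points of $D_0,$ the uniform convergence together with monotonicity gives a genuine quantitative control of the intersection of $\{D_\varepsilon=\cos(2\pi\theta)\}$ with intervals of fixed length. A secondary, minor, issue is that $S^\varepsilon$ is unbounded, but since Hausdorff convergence in this context is local in $\lambda$ (or equivalently can be phrased via the one-point compactification), restricting to compact windows $[0,M]$ and then letting $M\to\infty$ is sufficient.
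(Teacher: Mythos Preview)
Your approach is sound and, modulo the band-edge treatment (which is fixable; see below), proves the theorem. It is, however, genuinely different from the paper's argument.

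The paper does not use the explicit dispersion relation in the proof at all. Instead it introduces limit operators $A(\theta)$ acting on the spaces $V(\theta)$ and shows by soft functional-analytic means that $\lim_{\varepsilon\to 0}S^\varepsilon=\bigcup_{\theta}\sigma(A(\theta))$. One inclusion is obtained via an ``$NQ$-periodic'' two-scale homogenisation (Appendix~A): for each $N$, a limit operator $A_N$ is produced with $\sigma(A_N)\subset\lim_\varepsilon S^\varepsilon$ and $\sigma(A(j/N))\subset\sigma(A_N)$ for $0\le j\le N-1$; density of the rationals and continuity of $\theta\mapsto\lambda_k(\theta)$ complete that direction. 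The converse (Theorem~\ref{spcompthm}) is a compactness argument: for $\lambda_\varepsilon\to\lambda$, the rescaled Bloch eigenfunctions $u_\varepsilon\in H^1_{\theta_\varepsilon}(Q)$ are shown, via the {\it a priori} bounds and Lemma~\ref{contV}, to converge weakly in $H^1(Q)$ to a nontrivial eigenfunction of some $A(\theta)$. The identification of $\bigcup_\theta\sigma(A(\theta))$ with the solution set of (\ref{limiteq}) is then a separate, trivial calculation.

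What each approach buys: yours is much shorter and entirely elementary, but it rests on the closed-form transfer matrix and is therefore effectively restricted to $p\equiv 1$ (or piecewise constant $p$). The paper's route works for arbitrary $p$ with $p,p^{-1}\in L^\infty$, and---more importantly for the paper's purposes---the machinery it develops (the spaces $V(\theta)$, Lemma~\ref{contV}, compactness in $\theta$) is exactly what is reused and refined in Section~\ref{boundedintervalsection} for the bounded-interval problem, where no explicit dispersion relation is available.

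On your band-edge step: the diagonal extraction requires that $S_0$ have no isolated points, which you have not established, and your ``non-degenerate branch structure'' remark does not settle it either (a local minimum of $D_0$ with value exactly $1$ would be an isolated point of $S_0$, and a nearby local minimum of $D_\varepsilon$ with value slightly above $1$ would give no nearby point of $S^\varepsilon$). A clean fix: for every $\varepsilon>0$, $D_\varepsilon$ is a genuine Hill discriminant, so its local maxima have value $\ge 1$ and its local minima value $\le -1$; since your explicit expansions give $D_\varepsilon\to D_0$ in $C^2$ on compacta, the same inequalities pass to the critical values of $D_0$, whence $\{|D_0|\le 1\}$ is a union of non-degenerate closed intervals and the diagonal argument goes through.
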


For $\theta \in [0,1)$, we denote by $H^1_{\theta}(Q)$ the space of functions $u \in H^1(Q)$  that are 
$\theta$-quasiperiodic, {\it i.e.} such that 
$v(y)=\exp(2\pi{\rm i}\theta y)u(y),$ $y\in Q,$ for some\footnote{As the notation $H^1_0(Q)$ is usually reserved for the space of $H^1(Q)$ functions vanishing on the boundary of $Q,$ we denote by $H^1_{\#}(Q)$ the space $H^1_{\theta}(Q)$ when $\theta=0.$} 
$u\in H^1_{\#}(Q)$. We also denote
\beq
V(\theta):=\left\{v\in H^1_{\theta}(Q):p (y)v'(y)=0{\ {\rm for}\ }y \in Q_1\right\}.
\eeq{Vthetadef}


Consider an operator $A(\theta)$ such that
\[
(A(\theta)u,\varphi)=\int_{Q_0}p(y)u'(y)\overline{\varphi'(y)}\mathrm{d}y\ \ \ \forall \,\varphi\in V(\theta),\ \ u\in{\rm dom}(A(\theta))\subset V(\theta), 
\]
defined on the maximal possible domain ${\rm dom}(A(\theta)).$  Henceforth $(\cdot,\cdot)$ denotes the usual inner product in $L^2(Q).$ By a standard argument 
(see {\it e.g.} \cite{Kato}) such an operator exists, is unique and, under the adopted conditions on the coefficient $p,$ is self-adjoint and has compact inverse (except for the case $\theta=0,$ when it has compact inverse as an operator on 
$V(\theta)\ominus {\mathbb C}$).   
Therefore the spectrum $\spec{A(\theta)}$ is discrete and unbounded, {\it i.e.} it consists of eigenvalues 
$0\le\lambda_1(\theta)\le\lambda_2(\theta)\le \ldots,$
of finite multiplicity with eigenfunctions  $v^k(\theta)=v^k(\theta,y).$ The eigenfunctions corresponding to different eigenvalues are automatically orthogonal in 
$L^2(Q).$ We also carry out the orthogonalisation process on those eigenfunctions that correspond to the same eigenvalue, and normalise each eigenfunction so that  
$\Vert v^k(\theta)\Vert_{L^2(Q)}=1,$ for all $\theta\in[0,1),$ $k\in{\mathbb N}.$ 

Our aim is to show that the limit set $\lim_{\varepsilon\to0}S^\varepsilon$ coincides with the union of the spectra of the operators $A(\theta),$ 
$\theta\in[0,1),$ which, in turn, are described by the ''dispersion relation'' (\ref{limiteq}). 

In order to demonstrate first that the latter is included in the former, for each $N\in{\mathbb N},$ we define an ``intermediate'' operator 
$A_N$ in $L^2(NQ),$ whose spectrum is contained in $\lim_{\varepsilon\to0}S^\varepsilon$ and contains the spectrum of each of the operators 
$A(\theta),$ $\theta=j/N,$ $0\le j\le N-1.$ The details of this argument, 
which relies on a procedure that we refer to as ``$NQ$-periodic homogenisation'', are given in Appendix A.

An essential component of the proof of the converse inclusion is the following lemma.

\begin{lemma}
\label{contV}
For any given $\theta \in [0,1)$ and $\phi \in V\left(\theta\right)$, let $\theta_\ep \in [0,1)$ be such that $\theta_\ep \rightarrow \theta$ as $\ep \rightarrow 0$. Then there exist $\phi_\ep \in V\left(\theta_\ep\right)$ such that $\phi_\ep \rightarrow \phi$ strongly in $H^1(Q)$ as $\ep \rightarrow 0$.
\end{lemma}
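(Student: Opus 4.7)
The plan is to exploit the rigid structure of $V(\theta)$ in one dimension. Since $p > 0$ a.e., the condition $p v' = 0$ on $Q_1 = (0,\alpha) \cup (\beta, 1)$ forces any $v \in V(\theta)$ to be constant on each of the two connected components of $Q_1$. Writing $\phi \in V(\theta)$ as $\phi \equiv c_0$ on $(0,\alpha)$ and $\phi \equiv c_1$ on $(\beta, 1)$, the Sobolev embedding $H^1(Q) \hookrightarrow C(\overline{Q})$ in 1D gives the point values $\phi(0) = c_0$ and $\phi(1) = c_1$, so the $\theta$-quasiperiodicity condition collapses to the single scalar relation $c_1 = e^{2\pi i \theta} c_0$.

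To construct the approximants I would fix, once and for all, a cut-off $\psi \in C^\infty(\overline{Q})$ with $\psi \equiv 0$ on $[0,\alpha]$ and $\psi \equiv 1$ on $[\beta, 1]$, set $c_1^\varepsilon := e^{2\pi i \theta_\varepsilon} c_0$, and define
\[
\phi_\varepsilon(y) := \phi(y) + (c_1^\varepsilon - c_1)\,\psi(y).
\]
By the support properties of $\psi$, one has $\phi_\varepsilon \equiv c_0$ on $(0,\alpha)$ and $\phi_\varepsilon \equiv c_1^\varepsilon$ on $(\beta, 1)$; in particular $p\phi_\varepsilon' = 0$ on $Q_1$. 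The point values satisfy $\phi_\varepsilon(1) = c_1^\varepsilon = e^{2\pi i \theta_\varepsilon} c_0 = e^{2\pi i \theta_\varepsilon} \phi_\varepsilon(0)$, so $\phi_\varepsilon \in H^1_{\theta_\varepsilon}(Q)$, and hence $\phi_\varepsilon \in V(\theta_\varepsilon)$.

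Strong convergence in $H^1(Q)$ then follows at once from the identity $\phi_\varepsilon - \phi = (c_1^\varepsilon - c_1)\psi$: one obtains
\[
\|\phi_\varepsilon - \phi\|_{H^1(Q)} = |c_0|\,\bigl|e^{2\pi i \theta_\varepsilon} - e^{2\pi i \theta}\bigr|\,\|\psi\|_{H^1(Q)} \longrightarrow 0
\]
as $\theta_\varepsilon \to \theta$.

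There is no genuine obstacle: the key conceptual observation is that in the disconnected one-dimensional geometry $V(\theta)$ is extremely rigid — its elements are determined on $Q_1$ by a single complex parameter tied to the quasimomentum via the quasiperiodicity relation — and the fixed cut-off $\psi$ simply absorbs the required change of quasimomentum into $Q_0$ without affecting the values on $Q_1$. No approximation or density argument of the sort one might expect in a higher-dimensional analogue is needed.
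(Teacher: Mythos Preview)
Your proof is correct and follows essentially the same route as the paper. The paper decomposes $\phi = \eta(\theta,\cdot)\,c + v$ with $v\in H^1_0(\alpha,\beta)$ and $\eta(\theta,\cdot)$ a fixed piecewise-linear interpolant equal to $1$ on $[0,\alpha)$ and $e^{2\pi i\theta}$ on $(\beta,1)$, then replaces $\eta(\theta,\cdot)$ by $\eta(\theta_\varepsilon,\cdot)$; unwinding this, one sees $\phi_\varepsilon-\phi = c\bigl(e^{2\pi i\theta_\varepsilon}-e^{2\pi i\theta}\bigr)\psi_0$ for the piecewise-linear cut-off $\psi_0$, which is exactly your correction with a specific choice of $\psi$.
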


\begin{proof}
For $\theta \in [0,1)$ 
the space $V(\theta)$ consists of functions that are 
$\theta$-quasiperiodic and constant in each connected component of $Q_1,$ that is for any $\phi\in V(\theta)$ one has $\phi(y) = \eta({\theta},y)c+v(y),$ where 
$c \in \mathbb{C}$, $v \in H^1_0(a,b),$  and
\begin{equation*}
\eta(\theta,y):=\left\{\begin{array}{lll}1, & y\in[0,a), \\ \left(\exp(2\pi{\rm i}\theta)-1\right)(b - a)^{-1}(y-a)+1, & y \in[a,b], \\
\exp(2\pi{\rm i}\theta), & y\in(b,1).\end{array}\right.
\end{equation*}
For each value of $\varepsilon$ we now define $\phi_\ep$ by the formula $\phi_\ep(y)= \eta(\theta_\ep,y)c+v(y),$ 
$y\in Q.$ Notice that by construction $\phi_\ep \in V\left( \theta_\ep \right)$ and, since $\eta$ is uniformly continuous with 
respect to $\theta$, one has $\phi_\ep \rightarrow \phi$ strongly in $H^1(Q).$
\end{proof}

We next show that for a sequence $\lambda_\ep\in\spec{A_\ep}$ such that $\lambda_\ep\rightarrow\lambda$ the inclusion $\lambda\in\spec{A\left(\theta\right)}$ holds for some $\theta\in[0,1)$. 

\begin{theorem}\label{spcompthm}
\noindent Let $\lambda_\ep \in \spec{A_\ep}$ such that $\lambda_\ep\rightarrow \lambda$. Then there exist 
$\theta\in[0,1)$ and $u \in H^1_{\theta}(Q),$ $u\neq0,$ such that
\begin{equation}
\label{spcomres}
\int_a^b p(y)u'(y)\overline{\phi'(y)}\mathrm{d}y=\lambda\int_0^1u(y)\overline{\phi(y)}\mathrm{d}y,
\quad\quad\forall\phi\in 
V\left(\theta \right).
\end{equation}

\end{theorem}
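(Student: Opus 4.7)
The plan is to exploit the Floquet--Bloch fibration $\spec{A^\varepsilon}=\bigcup_{\theta\in[0,1)}\spec{A^\varepsilon(\theta)}$ of the whole-line operator to produce a true eigenpair on a fibre, derive an $\varepsilon$-uniform $H^1$ bound by a one-line energy estimate, and pass to the limit in the rescaled weak formulation using Lemma \ref{contV} to carry test functions between different quasimomenta.

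For each $\varepsilon$ I would pick $\theta_\varepsilon\in[0,1)$ such that $\lambda_\varepsilon$ is an eigenvalue of the Bloch fibre $A^\varepsilon(\theta_\varepsilon)$; this is legitimate because each fibre has compact resolvent, hence purely discrete spectrum, and the bands fill out $\spec{A^\varepsilon}$. Normalising in $L^2$ and rescaling $y=x/\varepsilon$ to the unit cell $Q$, I obtain $U_\varepsilon\in H^1_{\theta_\varepsilon}(Q)$ with $\|U_\varepsilon\|_{L^2(Q)}=1$ and
\[
\int_0^1 p(y)\bigl(\chi_0(y)+\varepsilon^{-2}\chi_1(y)\bigr)\,U_\varepsilon'(y)\,\overline{V'(y)}\,\mathrm{d}y = \lambda_\varepsilon\int_0^1 U_\varepsilon\overline{V}\,\mathrm{d}y\qquad\forall V\in H^1_{\theta_\varepsilon}(Q).
\]
Along a subsequence I may assume $\theta_\varepsilon\to\theta\in[0,1]$, identifying $\theta=1$ with $\theta=0$ if necessary.

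Testing the identity with $V=U_\varepsilon$ yields $\int_{Q_0}p|U_\varepsilon'|^2+\varepsilon^{-2}\int_{Q_1}p|U_\varepsilon'|^2=\lambda_\varepsilon$, so that $\|U_\varepsilon'\|_{L^2(Q_0)}\le C$ and, crucially, $\|U_\varepsilon'\|_{L^2(Q_1)}\le C\varepsilon$. Thus $\{U_\varepsilon\}$ is uniformly bounded in $H^1(Q)$, and the one-dimensional compact embedding $H^1(Q)\hookrightarrow C^0(\overline{Q})$ supplies, along a further subsequence, $U_\varepsilon\to U$ uniformly and $U_\varepsilon\rightharpoonup U$ weakly in $H^1(Q)$. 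The normalisation passes to the limit, giving $U\neq 0$; uniform convergence together with $\theta_\varepsilon\to\theta$ forces $U(1)=\exp(2\pi\mathrm{i}\theta)U(0)$, so $U\in H^1_\theta(Q)$; and $U_\varepsilon'\to 0$ strongly in $L^2(Q_1)$ forces $U'\equiv 0$ on $Q_1$ in the weak sense, hence $U\in V(\theta)$.

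Finally, for any $\phi\in V(\theta)$, Lemma \ref{contV} supplies $\phi_\varepsilon\in V(\theta_\varepsilon)$ with $\phi_\varepsilon\to\phi$ strongly in $H^1(Q)$. Because $\phi_\varepsilon'\equiv 0$ on $Q_1$, substituting $V=\phi_\varepsilon$ above collapses the left-hand side to $\int_{Q_0}p\,U_\varepsilon'\overline{\phi_\varepsilon'}\,\mathrm{d}y$, which converges to $\int_{Q_0}p\,U'\overline{\phi'}\,\mathrm{d}y$ by weak--strong duality, while the right-hand side tends to $\lambda\int_0^1 U\overline{\phi}\,\mathrm{d}y$ by strong $L^2$-convergence; this is precisely (\ref{spcomres}). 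The only delicate point is non-triviality of the limit, which in more degenerate settings requires the uniform Poincar\'e-type bounds of Proposition \ref{prop0}; here the $\varepsilon^{-2}$ scaling on $Q_1$ makes the Bloch bilinear form uniformly coercive on $H^1(Q)$, so $L^2$-compactness of $\{U_\varepsilon\}$ is immediate from Rellich, and the only remaining concern, that the quasimomentum in the test function varies with $\varepsilon$, is exactly what Lemma \ref{contV} is designed to resolve.
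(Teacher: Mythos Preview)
Your proof is correct and follows essentially the same route as the paper's: Floquet--Bloch fibre eigenfunctions, rescaling to $Q$, the energy estimate with $V=U_\varepsilon$ to get $\|U_\varepsilon'\|_{L^2(Q_1)}\le C\varepsilon$, weak $H^1$-compactness, and Lemma~\ref{contV} to pass test functions through the varying quasimomenta. The only cosmetic difference is that you verify $U\in H^1_\theta(Q)$ via uniform convergence from the one-dimensional embedding $H^1(Q)\hookrightarrow C(\overline{Q})$, whereas the paper multiplies by $\exp(-2\pi{\rm i}\theta_\varepsilon y)$ and checks that the resulting periodic functions converge weakly in $H^1_\#(Q)$; both arguments are equally short and valid.
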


\begin{proof}
Since $\lambda_\ep \in \spec{A_\ep}$, by the Floquet-Bloch decomposition, there exists $u_\ep \in H^1_{\theta_\ep}(\ep Q),$ 
$u\neq 0,$ such that

\begin{equation}\label{wprobbloch}
\int_{\ep Q}p(x/\varepsilon)\bigl(\varepsilon^2\chi_0(x/\varepsilon)+\chi_1(x/\varepsilon))\bigr)u_\ep'(x)\overline{\phi'(x)}\mathrm{d}x=\lambda_\ep \int_{\ep Q}u_\ep(x)
\overline{\phi(x)}\mathrm{d}x
\end{equation}
for all $\phi \in H^1_{\theta_\ep}(\ep Q).$ Rescaling the formulation \eqref{wprobbloch} with $y=x/\varepsilon$  yields the existence of $u_\ep \in H^1_{\theta_\ep}(Q),$ 
$\norm{u_\ep}{L^2(Q)}=1,$ such that

\begin{equation}\label{wprobbloch1}
\ep^{-2}\int_{Q_1} p(y)u_\ep'(y)\overline{\phi'(y)}\mathrm{d}y+\int_{Q_0}p(y)u_\ep(y)\overline{\phi(y)}\mathrm{d}y=
\lambda_\ep\int_Qu_\ep(y)\overline{\phi(y)}\mathrm{d}y
\end{equation}
for all $\phi\in H^1_{\theta_{\ep}}(Q).$

The sequence $\theta_\ep$ is bounded and therefore there exists some $\theta \in [0,1]$ such that, up to a subsequence which we do not relabel, $\theta_\ep \rightarrow \theta$. Without loss of generality, if $\theta=1$ we set $\theta=0,$ so that $\theta\in[0,1).$ By substituting $\phi=u_\ep$ in \eqref{wprobbloch1}, the sequence $u_\ep$ satisfies the bounds
\beq
\bigl\Vert\chi_1u_\ep'\bigr\Vert_{L^2(Q)}\le C\varepsilon,\ \ \ \ \ \ \ \ \ \ \bigl\Vert\chi_0 u_\ep'\bigr\Vert_{L^2(Q)}\le C. 
\eeq{waprioribounds}
with a constant $C>0$ independent of $\ep.$

Due to the weak compactness of bounded sets in $H^1(Q),$ the bounds \eqref{waprioribounds}, along with $\norm{u_\ep}{L^2(Q)}=1,$ imply that, up to extracting a subsequence, $u_\ep$ converge weakly in $H^1(Q)$, and therefore strongly in $L^2(Q),$ to some $u_0 \in H^1(Q),$ $\norm{u_0}{L^2(Q)}=1$. Clearly, for 
$w_\ep(y):=\exp(-2\pi{\rm i}\theta_\ep y)u_\ep(y)$ one has $w_\ep\in H^1_{\#}(Q),$ and the uniform convergence of  
$\exp(2\pi{\rm i}\theta_\ep y)$ to $\exp(2\pi{\rm i}\theta y)$ as $\varepsilon\to0$ implies that $w_\ep$ converge weakly in $H^1(Q)$ to $w_0$ given by the formula 
$w_0(y)=\exp(-2\pi{\rm i}\theta y)u_0(y),$ so that $u_0\in H^1_{\theta}(Q).$ 
Furthermore, \eqref{waprioribounds} implies that $\chi_1 u_\ep'\rightarrow 0$ strongly in $L^2(Q),$ hence $u_0\in V\left(\theta\right).$ 

In order to show that $u_0$ satisfies the limit identity \eqref{spcomres}, for a fixed $\phi_0 \in V\left( \theta\right),$ let $\phi_\ep \in V\left( \theta_\ep \right)$ be given by Lemma \ref{contV}. Substituting $\phi_\ep$ in \eqref{wprobbloch1}, we obtain
\beq
\int_a^b p(y)u_\ep'(y)\overline{\phi_\ep'(y)}\mathrm{d}y=\lambda_\ep\int_0^1u_\ep(y)\overline{\phi_\ep(y)}\mathrm{d}y.
\eeq{wprobbloch2}
By virtue of the facts that $\phi_\ep \rightarrow \phi_0$ strongly in $H^1(Q)$ and $u_\ep \rightharpoonup u_0$ weakly 
in $H^1(Q)$, passing to the limit $\ep \rightarrow 0$ in \eqref{wprobbloch2} immediately implies \eqref{spcomres}.
\end{proof}

The above ``limit spectrum'' $\lim_{\varepsilon\to0}S^\varepsilon$ is strictly larger than 
the set obtained by the two-scale analysis of the operator $A^\varepsilon$ of the paper 
\cite{Zh2000}. In particular, the spectrum of the homogenised operator obtained in \cite{Zh2000} coincides with 
$\{\lambda_k(0)\}_{k=1}^\infty,$ using our notation. Our analysis above shows that the set 
$\lim_{\varepsilon\to0}S^\varepsilon$ has, in fact, a band-gap structure, 
with infinitely many gaps opening  in the interval $[0,\infty),$ as $\varepsilon\to0.$ This fact suggests possible applications of the above composite structures to the design of optical or acoustic band-gap materials, which we discuss in Section \ref{numericssection}. The above effect also raises a mathematical question of the analysis of the limit behaviour of the operators ${A}^\varepsilon$ in the case when $(a,b)$ is a bounded interval, which we study in the next section. 

In what follows we assume that 
$a,b\in\varepsilon F_1.$ Our results are also easily carried over to the case when $a,b\in\varepsilon F_0,$
if one modifies (\ref{bilinearform}) on those connected components of $\varepsilon F_0$ that contain $a$ or $b,$ by 
changing the related coefficient from $\varepsilon^2$ to unity.

\section{Spectral behaviour on a bounded interval}
\label{boundedintervalsection}

It is known that the classical, ``moderate-contrast'', analogue of the 
problem (\ref{ourproblem})--(\ref{bilinearform}) leads to limit spectra of different kinds 
for problems on bounded and unbounded intervals $(a,b):$ the limit set in the case of the 
problem in the whole space is purely absolutely continuous while in the case 
$-\infty<a<b<\infty$ it is purely discrete, {\it i.e.} it consists of eigenvalues with finite multiplicities, 
see {\it e.g.} \cite{BLP}. A similar situation occurs in 
multidimensional high-contrast problems where the inclusion $F_0\cap Q$ has a non-zero distance to the boundary 
of $Q,$ see \cite{Zh2000}, where, in addition, some eigenvalues of infinite multiplicity are present.

As we shall see next, this is not the case for the problem 
(\ref{ourproblem})--(\ref{bilinearform}), in particular rescaling $y=x/\varepsilon$ and 
replacing the form (\ref{bilinearform}) with an integral over the whole of ${\mathbb R},$ 
leads to higher-order errors in the limit as $\varepsilon\to0,$ which can be 
ignored in the leading-order, ``homogenised'', description of the operator 
${A}^\varepsilon.$ 

In this section we employ, for convenience, the following notation: $\Omega:=(a,b),$ 
$\Omega^\varepsilon:=\Omega\cap(\varepsilon F_1),$ $\Omega^\varepsilon:=
\Omega\cap(\varepsilon F_0).$

\subsection{The convergence result}
\label{rigorousstatementsection}

The following theorem holds.

\begin{theorem}
\label{maintheorembounded}
Consider an operator ${A}^\varepsilon$ from the class described in Section \ref{setupsubsection}, subject to 
the geometric modification mentioned at the end of Section \ref{wholespacesection}. The set $\lim_{\varepsilon\to0}S^\varepsilon$ is given by the union 
of solutions to the equation (\ref{limiteq}) for all $\theta\in[0,1).$  In particular, it is independent of the choice of the 
space ${\mathfrak H}$ in Section \ref{setupsubsection}.

\end{theorem}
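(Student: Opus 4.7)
The plan is to establish the two set inclusions that together give Hausdorff convergence, and to observe that the arguments are insensitive to the choice of the closed subspace $\mathfrak{H}\supset C_0^\infty(a,b)$.

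For the inclusion $\bigcup_{\theta\in[0,1)}\sigma(A(\theta))\subseteq\lim_{\varepsilon\to0}S^\varepsilon$, I would construct a Weyl sequence inside $C_0^\infty(a,b)\subset\mathfrak{H}$. Given $\theta\in[0,1)$ and an eigenvalue $\lambda=\lambda_k(\theta)\in\sigma(A(\theta))$ with normalised eigenfunction $v^k(\theta,\cdot)$, I extend $v^k$ $\theta$-quasiperiodically to all of $\mathbb{R}$ (the $\theta$-quasiperiodicity ensures $H^1_{\mathrm{loc}}$ matching between neighbouring cells), rescale via $y=x/\varepsilon$, and multiply by a smooth cutoff $\psi_\varepsilon$ supported on $[a+\varepsilon^{1/2},b-\varepsilon^{1/2}]$ with $|\psi_\varepsilon'|\le C\varepsilon^{-1/2}$. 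The resulting trial function $u_\varepsilon$ lies in $C_0^\infty(\Omega)$. Since $v^k$ satisfies the eigenvalue equation in the scaled soft region $\varepsilon Q_0$ and is constant on each component of the scaled stiff region $\varepsilon Q_1$, the bulk contribution to $(A^\varepsilon-\lambda)u_\varepsilon$ cancels identically; only the commutator of $A^\varepsilon$ with $\psi_\varepsilon$ survives, and a direct estimate shows that its $L^2$-norm is a vanishing fraction of $\|u_\varepsilon\|_{L^2}$ (which itself is bounded below, since the cutoff captures $O(\varepsilon^{-1})$ full periods).

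For the reverse inclusion, take $\lambda_\varepsilon\in S^\varepsilon$ with $\lambda_\varepsilon\to\lambda$, and choose approximate eigenfunctions $u_\varepsilon\in\mathfrak{H}$ with $\|u_\varepsilon\|_{L^2(\Omega)}=1$ and $\|(A^\varepsilon-\lambda_\varepsilon)u_\varepsilon\|_{L^2(\Omega)}\to 0$. Testing the weak formulation against $u_\varepsilon$ yields the a priori bounds $\|\chi_1(\cdot/\varepsilon)u_\varepsilon'\|_{L^2(\Omega)}\le C\varepsilon$ and $\|\chi_0(\cdot/\varepsilon)u_\varepsilon'\|_{L^2(\Omega)}\le C$, exactly as in (\ref{waprioribounds}). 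To recover \emph{all} Bloch components, I would then invoke the multi-cell two-scale convergence framework of \cite{AllaireConca} and \cite[Chapter 5]{Cooper} adopted in the introduction: along a subsequence, $u_\varepsilon$ two-scale converges on $\Omega\times NQ$ for each $N\in\mathbb{N}$ to a limit $u_0(x,y)$, whose $y$-dependence, after decomposition into Floquet components on the $N$-cell, can be pinned down to a specific $\theta\in[0,1)$; the first a priori bound then forces $u_0(x,\cdot)\in V(\theta)$ for almost every $x\in\Omega$.

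The main obstacle will be the non-degeneracy of $u_0$: one has to exclude the possibility that $\|u_0\|_{L^2(\Omega\times Q)}=0$, i.e.\ that the full $L^2$-mass of $u_\varepsilon$ escapes into the orthogonal complement of the limit eigenspace. This is precisely the scenario covered by the uniform Poincar\'e-type inequality asserted in Proposition \ref{prop0} (equivalently Proposition \ref{lem1}): it bounds the component of $u_\varepsilon$ orthogonal to $V(\theta)$ by the $L^2$-norm of $u_\varepsilon'$ on the stiff region $\Omega\cap(\varepsilon F_1)$, which is $O(\varepsilon)$ by the a priori estimate. Once non-degeneracy is secured, passing to the limit in the weak formulation with test functions of the form $\psi(x)\phi_\varepsilon(x/\varepsilon)$, $\psi\in C_0^\infty(\Omega)$ and $\phi_\varepsilon\in V(\theta_\varepsilon)$ produced by Lemma \ref{contV}, delivers (\ref{spcomres}) with $\psi$ arbitrary, forcing $\lambda\in\sigma(A(\theta))$. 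The independence from $\mathfrak{H}$ is transparent: the lower-bound construction uses only $C_0^\infty(a,b)$, while the upper-bound argument uses only a priori energy bounds and interior test functions, neither of which sees the specific closed subspace $\mathfrak{H}$.
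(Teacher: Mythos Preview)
Your lower-bound construction (Weyl sequence built from a rescaled $\theta$-quasiperiodic eigenfunction times a slowly varying cutoff) is a legitimate alternative to the paper's route via $NQ$-periodic homogenisation and strong two-scale resolvent convergence (Appendix~A). One remark: the ``bulk contribution'' does not cancel \emph{identically} on the stiff component, since $(A^\varepsilon-\lambda)w=-\lambda w$ pointwise on each $\varepsilon Q_1$-interval; the cancellation is only asymptotic and comes from the matching condition hidden in the limit identity $\int_{Q_0}p\,v'\overline{\phi'}=\lambda\int_Q v\overline{\phi}$ tested against the piecewise-affine $\phi=\eta(\theta,\cdot)$. This is routine to verify, but your sketch understates it.

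The genuine gap is in the converse inclusion. First, the stated bound $\|\chi_1(\cdot/\varepsilon)u_\varepsilon'\|_{L^2(\Omega)}\le C\varepsilon$ is false in the unrescaled variable: from \eqref{eq:bdspcom2} with $\varphi=u_\varepsilon$ one only obtains $\|u_\varepsilon'\|_{L^2(\Omega_1^\varepsilon)}\le C$, not $C\varepsilon$. The $O(\varepsilon)$ smallness you need arises only \emph{after} rescaling $y=x/\varepsilon$, and then the domain has length $N_\varepsilon\sim\varepsilon^{-1}$, which is exactly why one needs the \emph{domain-uniform} constant in Proposition~\ref{prop0} rather than a one-cell Poincar\'e inequality. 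More seriously, the step ``whose $y$-dependence \dots\ can be pinned down to a specific $\theta\in[0,1)$'' is not justified and is in general false: a bounded-interval eigenfunction is not a single Bloch wave, and its $NQ$-two-scale limit will typically carry nontrivial Floquet components at \emph{all} rational $j/N$ simultaneously. There is no compactness argument that selects one $\theta$ here (in contrast to the whole-space case, where $\theta_\varepsilon$ is given a priori by the Floquet decomposition). Consequently your proposed test functions $\psi(x)\phi_\varepsilon(x/\varepsilon)$ with $\phi_\varepsilon\in V(\theta_\varepsilon)$ have no well-defined target.

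The paper handles this by a different mechanism: it rescales, projects $T_\varepsilon E(u_\varepsilon)$ onto $V(D^\varepsilon)$ using Proposition~\ref{prop0} (this is where the uniform constant is essential to control $\|u_\varepsilon-U_\varepsilon\|_{L^2}$), takes the \emph{discrete} Bloch transform of the resulting $U_\varepsilon$, and encodes the distribution of mass over quasimomenta as a family of nonnegative measures $\mu^k_\varepsilon$ on $[0,1]$. Weak-$*$ compactness of Radon measures and the energy bound \eqref{Blochest0}--\eqref{Blochest2} ensure that the limit measures $\mu^k$ have total mass one, and testing \eqref{eq:bdspcom2} against $U_\varepsilon$ yields in the limit the identity $\sum_k\int\lambda_k(\theta)\,\mathrm{d}\mu^k=\lambda\sum_k\int\mathrm{d}\mu^k$, which forces $\lambda=\lambda_{k_0}(\theta)$ for some $\theta$ in the support of a nonzero $\mu^{k_0}$. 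No single $\theta$ is ever extracted from $u_\varepsilon$; the argument works at the level of measures in $\theta$.
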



The inclusion of the union of the spectra $\sigma(A(\theta))$ in the set $\lim_{\varepsilon\to0}S^\varepsilon$ is proved in the same way as in the case of the whole-space problem, see the proof of Theorem \ref{maintheoremwholespace}. In what follows we therefore discuss the  converse inclusion ({\it cf.} Theorem \ref{spcompthm}).


Notice first that for each $\lambda_\ep \in \spec{A_\ep}$, there exists $u_\ep \in {\mathfrak H}$, $\norm{u_\ep}{L^2(\Omega)}=1,$ such that 
\begin{equation}
\label{eq:bdspcom2}
\int_{\Omega^\ep_1} p\left(\tfrac{x}{\ep}\right)u_\ep'(x)\overline{\varphi'(x)}\mathrm{d}x
+\ep^2\int_{\Omega^\ep_0}p\left(\tfrac{x}{\ep}\right)u_\ep'(x)\overline{\varphi'(x)}\mathrm{d}x
=\lambda_\ep\int_{\Omega}u_\ep(x)\overline{\varphi(x)}\mathrm{d}x
\end{equation} 
for all $\varphi\in {\mathfrak H}.$ Setting $\varphi=u_\ep$ in \eqref{eq:bdspcom2} yields the estimates (``{\it a priori} bounds'' )
\begin{align}
\norm{u_\ep'}{L^2(\Omega^\ep_1)}\le C_{\rm B},\ \ \ \ \ \ \ \ep \norm{u_\ep'}{L^2(\Omega^\ep_0)}\le C_{\rm B}, \label{eq:bdspcom3}
\end{align}
where $C_{\rm B}>0$ is independent of $\varepsilon.$

For every bounded interval $D$ we denote $D_1:=D\cap F_1$ and introduce the function space ({\it cf.} 
(\ref{Vthetadef}), (\ref{V1def}), (\ref{VNdef}))
\[
V(D):=\{u\in H^1(D): p(y)u'(y)=0{\ for\ } y\in D_1\}.
\]
The following statement is central to the proof of Theorem \ref{maintheorembounded}.

\begin{proposition}
\label{prop0}
There exists a constant $C_\perp>0$ such that 
\[
\label{nondegener}
\bigl\Vert P_{V(D)^\perp}u\bigr\Vert_{H^1(D)}\le C_\perp\Vert u'\Vert_{L^2(D_1)}
\]
for any bounded interval $D\subset{\mathbb R}$ and any $u\in H^1(D).$ 
\end{proposition}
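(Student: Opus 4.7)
The proposition asserts a uniform Poincar\'e-type estimate for the $H^1$-orthogonal projection onto the complement of $V(D)$. Since $p,p^{-1}\in L^\infty$, the condition $p(y)u'(y)=0$ on $D_1$ is equivalent to $u'\equiv 0$ on $D_1$, so $V(D)$ consists of all $H^1(D)$ functions that are constant on each connected component of $D_1$. My plan is to construct an explicit linear quasi-projection $R:H^1(D)\to V(D)$ satisfying $\Vert u-Ru\Vert_{H^1(D)}\le C\Vert u'\Vert_{L^2(D_1)}$; since the true projection $P_{V(D)}u$ minimises the $H^1$-distance to $V(D)$, this immediately yields $\Vert P_{V(D)^\perp}u\Vert_{H^1(D)}=\Vert u-P_{V(D)}u\Vert_{H^1(D)}\le\Vert u-Ru\Vert_{H^1(D)}$ and hence the result with $C_\perp=C$.

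Enumerate the connected components of $D_1$ as open intervals $I_j=(a_j,b_j)$ and those of $D_0:=D\cap F_0$ as intervals $K_k$; because $Q_0=(\alpha,\beta)$ is a single interval in the unit cell, every interior $I_j$ has length $1-(\beta-\alpha)$ and every interior $K_k$ has length $\beta-\alpha$, with at most one boundary component at each end of $D$ possibly truncated. Define $Ru\equiv c_j:=|I_j|^{-1}\int_{I_j}u$ on each $I_j$, and on each interior $K_k=(b_j,a_{j+1})$ set $Ru:=u+\ell_k$, where $\ell_k$ is the affine function on $K_k$ satisfying $\ell_k(b_j)=c_j-u(b_j)$ and $\ell_k(a_{j+1})=c_{j+1}-u(a_{j+1})$; this ensures continuity of $Ru$ across cell boundaries and $(Ru)'\equiv 0$ on $D_1$. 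On truncated boundary components of $D_0$ adjacent to only one $I_j$, take $\ell_k$ to be the constant matching on the inner side, so $(Ru)'=u'$ there.

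The estimates split by cell. On $I_j$, the mean-zero Poincar\'e inequality gives $\Vert u-c_j\Vert_{L^2(I_j)}^2\le C|I_j|^2\Vert u'\Vert_{L^2(I_j)}^2$, while trivially $\Vert(u-Ru)'\Vert_{L^2(I_j)}^2=\Vert u'\Vert_{L^2(I_j)}^2$. The crucial trace-type bound $|c_j-u(a_j)|^2+|c_j-u(b_j)|^2\le C|I_j|\Vert u'\Vert_{L^2(I_j)}^2$ follows from the identity $c_j-u(b_j)=-|I_j|^{-1}\int_{I_j}\int_y^{b_j}u'(s)\,ds\,dy$ together with Cauchy--Schwarz. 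Since each interior $\ell_k$ is affine with boundary values controlled by this estimate, one obtains $\Vert\ell_k\Vert_{L^2(K_k)}^2+\Vert\ell_k'\Vert_{L^2(K_k)}^2\le C(\Vert u'\Vert_{L^2(I_j)}^2+\Vert u'\Vert_{L^2(I_{j+1})}^2)$, where the fixed lower bound $|K_k|\ge\beta-\alpha$ is essential for the slope estimate $\Vert\ell_k'\Vert_{L^2(K_k)}^2=|\Delta_k|^2/|K_k|$. Summing over $j,k$ and noting that each $I_j$ enters at most two neighbouring $K_k$-terms gives $\Vert u-Ru\Vert_{H^1(D)}^2\le C\Vert u'\Vert_{L^2(D_1)}^2$ with $C$ depending only on $\alpha,\beta$.

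The main obstacle dictating the design of $R$ is that the right-hand side of the inequality budgets only $\Vert u'\Vert_{L^2(D_1)}$, so on each $K_k$ the correction $\ell_k$ must be constructible from $c_j,u(a_j),u(b_j)$ alone, without ever invoking an integral of $u'$ on $D_0$; the mean-value choice of $c_j$ combined with the trace-type bound is precisely what makes this possible. Uniformity of $C_\perp$ across all bounded intervals $D$ rests on periodicity pinching interior cell lengths between fixed positive constants; truncated boundary pieces, being handled by one-sided constant corrections, contribute only bounded terms independent of their (possibly vanishing) length, and the degenerate cases $D_1=\emptyset$ or $D\subset F_1$ are either trivial or reduce to a single Poincar\'e inequality on an interval of length at most $1-(\beta-\alpha)$.
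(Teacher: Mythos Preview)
Your argument is correct and takes a genuinely different route from the paper. The paper does not prove Proposition~\ref{prop0} directly: it first establishes an equivalent single-cell statement (Proposition~\ref{lem1}), namely that $\vert\vert\vert w\vert\vert\vert^2\le\tilde C\int_{Q_1}|w'|^2$ for every $w\in V^\perp(\theta)$, uniformly in $\theta\in[0,1)$. That proof proceeds by characterising $V^\perp(\theta)$ explicitly (Lemma~\ref{prop2}) and then splitting into three regimes of $\theta$ ($\theta=0$; $\theta$ near $0$ or $1$; $\theta$ away from $0,1$), each handled by a separate string of elementary but somewhat delicate inequalities. The equivalence of Propositions~\ref{prop0} and~\ref{lem1} is then shown in Appendix~B via the discrete Floquet--Bloch transform and Parseval's identity.

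Your construction bypasses all of this: you exhibit an explicit element $Ru\in V(D)$ (averages on the $I_j$, affine interpolation of the matching defects on interior $K_k$, constant correction on boundary $K_k$) and bound $\Vert u-Ru\Vert_{H^1(D)}$ cell by cell, using only the mean-value Poincar\'e inequality and the trace bound $|c_j-u(\partial I_j)|^2\le|I_j|\,\Vert u'\Vert_{L^2(I_j)}^2$. The key structural points---that interior $K_k$ have length exactly $\beta-\alpha$ (so the affine slope is controlled), that each $I_j$ feeds at most two $K_k$, and that truncated boundary pieces are tamed by one-sided constant corrections---are all handled cleanly. The degenerate cases $D_1=\emptyset$ and $D\subset F_1$ are indeed trivial. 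What your approach buys is directness and portability: it avoids the quasiperiodic machinery entirely and makes transparent that the constant depends only on $\alpha,\beta$. What the paper's approach buys is the uniform single-cell estimate of Proposition~\ref{lem1} as an independently stated result, which is the form actually invoked later; but as a proof of Proposition~\ref{prop0} itself, yours is considerably shorter.
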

Henceforth $V(D)^\perp$ denotes the orthogonal complement of $V(D)$ in the space $H^1(D)$ equipped with the usual inner product, and $P_{V(D)^\perp}u$ denotes the orthogonal projection of the function $u$ onto 
$V(D)^\perp.$

Consider the norm 
\begin{equation}
\label{eq1}
\vert\vert\vert u\vert\vert\vert:=\Biggl(\left\vert\int_{Q_1}u(y)\mathrm{d}y\right\vert^2 + \int_Q\bigl\vert u'(y)\bigr\vert^2\mathrm{d}y\Biggr)^{1/2},\ \ \ \ u\in H^1(Q),
\end{equation}
which, in view of Lemma \ref{prop111} in Appendix B, is equivalent to the usual $H^1$-norm.  In the same 
appendix the above Proposition \ref{prop0} is shown to be equivalent to the following ``uniform in $\theta$'' Poincar\'{e}-type 
inequality. 

\begin{proposition}
\label{lem1}
There exists a constant $\tilde{C}>0,$ which depends on $\alpha$ and $\beta$ only,
such that for any $\theta\in[0,1)$
\begin{equation}
\label{lem1:eq1}
\vert\vert\vert w\vert\vert\vert^2\le\tilde{C}\int_{Q_1}\bigl\vert w'(y)\bigr\vert^2\mathrm{d}y, \quad\ \forall w \in V^\perp(\theta).
\end{equation}
\end{proposition}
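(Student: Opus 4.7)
The plan is to argue by contradiction, combining weak compactness of bounded $H^1(Q)$ sequences with the recovery construction of Lemma \ref{contV} and an ODE rigidity observation on the ``soft'' interval $Q_0 = (\alpha,\beta)$.

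Suppose the estimate fails. Then there exist sequences $\theta_n \in [0,1)$ and $w_n \in V^\perp(\theta_n)$ with $\vert\vert\vert w_n\vert\vert\vert = 1$ and $\|w_n'\|_{L^2(Q_1)} \to 0$. Since $\vert\vert\vert\cdot\vert\vert\vert$ is equivalent to the usual $H^1$-norm (Lemma \ref{prop111}), the sequence $\{w_n\}$ is bounded in $H^1(Q)$, so after extracting a subsequence $w_n \rightharpoonup w$ weakly in $H^1(Q)$, strongly in $L^2(Q)$ and uniformly on $\overline{Q}$ (by the 1D Sobolev embedding), while $\theta_n \to \theta \in [0,1)$ (set $\theta = 0$ if the limit is $1$).

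I would then identify the limit $w$ as zero. Trace continuity applied to the identity $w_n(1) = \exp(2\pi {\rm i}\theta_n) w_n(0)$ places $w$ in $H^1_\theta(Q)$. The hypothesis on $\|w_n'\|_{L^2(Q_1)}$ forces $w' \equiv 0$ on $Q_1$, so $w$ is constant on each connected component of $Q_1$, and quasiperiodicity then ties the two constants so that $w \in V(\theta)$. On the other hand, for any fixed $\phi \in V(\theta)$, Lemma \ref{contV} provides $\phi_n \in V(\theta_n)$ with $\phi_n \to \phi$ strongly in $H^1(Q)$, so passing to the limit in $(w_n,\phi_n)_{H^1} = 0$ yields $(w,\phi)_{H^1} = 0$, i.e.\ $w \in V^\perp(\theta)$. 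Hence $w \in V(\theta) \cap V^\perp(\theta) = \{0\}$.

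The key step, and the one I expect to be the main obstacle, is upgrading this null weak limit into the strong convergence $\int_{Q_0}|w_n'|^2 \to 0$ needed to contradict the normalisation $\vert\vert\vert w_n\vert\vert\vert = 1$. The idea is to exploit ODE rigidity: since $H^1_0(\alpha,\beta)$ embeds into $V(\theta_n)$ for every $n$ (extension by zero is $\theta_n$-quasiperiodic and has vanishing derivative on $Q_1$), testing the orthogonality $w_n \in V^\perp(\theta_n)$ against such functions gives $-w_n'' + w_n = 0$ weakly on $(\alpha,\beta)$. Hence the restriction of each $w_n$ to $Q_0$ is a linear combination of $\cosh$ and $\sinh$, uniquely determined by its boundary traces $w_n(\alpha), w_n(\beta)$; by Sobolev trace continuity these converge to $w(\alpha) = w(\beta) = 0$, so $w_n \to 0$ in $C^1([\alpha,\beta])$ and in particular $\int_{Q_0}|w_n'|^2 \to 0$. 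Combined with $|\int_{Q_1} w_n|^2 \to 0$ (from strong $L^2$-convergence to $0$) and $\int_{Q_1}|w_n'|^2 \to 0$ (hypothesis), this yields $\vert\vert\vert w_n\vert\vert\vert^2 \to 0$, the desired contradiction.
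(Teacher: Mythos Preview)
Your compactness argument is correct and takes a genuinely different route from the paper. The paper proves the inequality directly: it first records (Lemma~\ref{prop2}) that every $w\in V^\perp(\theta)$ is affine on $Q_0$, with slope tied to $\int_{Q_1}w$ by an explicit $\theta$-dependent coefficient, and that $\int_{Q_1}w=0$ when $\theta=0$. It then splits into three regimes---$\theta=0$, $\theta$ near the endpoints of $[0,1)$, and $\theta$ bounded away from $\{0,1\}$---and in each case estimates $\vert\vert\vert w\vert\vert\vert$ by explicit algebraic manipulation of those identities together with the quasiperiodic boundary relation. Your approach bypasses the case split entirely: you use Lemma~\ref{contV} to show the weak limit lies in $V(\theta)\cap V^\perp(\theta)=\{0\}$, and then recover the missing strong convergence on $Q_0$ from the finite-dimensionality of the ODE solution space there. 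The paper's route yields, in principle, a computable constant $\tilde C$; yours is cleaner and would adapt more readily to higher-dimensional or vector-valued analogues.

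One correction: $V^\perp(\theta)$ is defined in the paper with respect to the inner product associated to $\vert\vert\vert\cdot\vert\vert\vert$, namely $\langle u,v\rangle=\bigl(\int_{Q_1}u\bigr)\overline{\bigl(\int_{Q_1}v\bigr)}+\int_Q u'\,\overline{v'}$, not the standard $H^1$-product. This does not affect your limit step (weak-times-strong convergence still passes to the limit in this bilinear form), but it does change the ODE in your rigidity step: since $\int_{Q_1}\phi=0$ for $\phi\in H^1_0(\alpha,\beta)$, orthogonality gives $\int_{Q_0}w_n'\,\overline{\phi'}=0$, i.e.\ $w_n''=0$ on $Q_0$, so each $w_n$ is \emph{affine} there (this is precisely Lemma~\ref{prop2}(i)) rather than a combination of $\cosh$ and $\sinh$. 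The remainder of your argument is unchanged, as affine functions are still determined by their two boundary values.
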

We denote by $V^\perp(\theta)$ the orthogonal complement of $V(\theta)$ defined by (\ref{Vthetadef}) in the space $H_\theta^1(Q)$ equipped with the inner product in $H^1(Q)$ associated to the norm (\ref{eq1}).

\begin{proof}
We first note some properties of functions that belong to the space $V^\perp(\theta),$ which follow immediately from the characterisation of the space $V(\theta)$ given 
in the proof of Lemma \ref{contV}.

\begin{lemma}
\label{prop2}
Let $w \in V^\perp(\theta),$ then

(i) The equation $w''(y) = 0$ holds for $y\in Q_0$. In particular, the function $w$ is linear on the $Q_0$-component of the unit cell: $w(y) = (w(\beta)-w(\alpha))(\beta-\alpha)^{-1}(y - w(\alpha))+ w(\alpha)$ for $y \in Q_0$.

(ii) For $\theta \neq 0$ one has  $(\beta-\alpha)^{-1}(w(\beta)-w(\alpha))=(1-\exp(-2\pi{\rm i}\theta))^{-1}
(\alpha+(1-\beta)\exp(-2\pi{\rm i}\theta))\int_{Q_1} w(y)\mathrm{d}y.$

(iii) For $\theta = 0$ one has $\int_{Q_1}w(y)\mathrm{d}y=0.$


\end{lemma}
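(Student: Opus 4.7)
The plan is to use the characterization of $V(\theta)$ established in the proof of Lemma~\ref{contV}, namely that every $\phi\in V(\theta)$ decomposes uniquely as $\phi(y)=\eta(\theta,y)c+v(y)$ for some $c\in\mathbb{C}$ and $v\in H^1_0(Q_0)$ extended by zero to $Q$. The inner product on $H^1_\theta(Q)$ associated with the norm $\vert\vert\vert\cdot\vert\vert\vert$ from (\ref{eq1}) reads
\[
\langle w,\phi\rangle=\int_{Q_1}w(y)\,\mathrm{d}y\,\overline{\int_{Q_1}\phi(y)\,\mathrm{d}y}+\int_Q w'(y)\overline{\phi'(y)}\,\mathrm{d}y,
\]
and I would obtain (i)--(iii) by testing the orthogonality relation $\langle w,\phi\rangle=0$ against the two classes of admissible test functions $v\in H^1_0(Q_0)$ and $\phi=\eta(\theta,\cdot)$ separately.

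For (i), first take $\phi=v\in H^1_0(Q_0)$, so that $c=0$ and $\int_{Q_1}\phi=0$. The orthogonality relation collapses to $\int_{Q_0}w'(y)\overline{v'(y)}\,\mathrm{d}y=0$ for every $v\in H^1_0(Q_0)$, which is the weak formulation of $w''=0$ on $Q_0$. This proves (i), and as a by-product it delivers the identity $\int_{Q_0}w'\,\mathrm{d}y=w(\beta)-w(\alpha)$ that I will need below.

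For (ii) and (iii), take $\phi=\eta(\theta,\cdot)$. A direct computation from the explicit piecewise formula for $\eta$ yields $\int_{Q_1}\eta(\theta,y)\,\mathrm{d}y=\alpha+(1-\beta)\exp(2\pi{\rm i}\theta)$ together with $\eta'(\theta,y)=(\exp(2\pi{\rm i}\theta)-1)(\beta-\alpha)^{-1}$ on $Q_0$ (and $\eta'\equiv0$ on $Q_1$). Substituting these into the orthogonality relation and invoking the identity from step one gives the single scalar constraint
\[
\bigl(\alpha+(1-\beta)e^{-2\pi{\rm i}\theta}\bigr)\int_{Q_1}w(y)\,\mathrm{d}y+\frac{e^{-2\pi{\rm i}\theta}-1}{\beta-\alpha}\bigl(w(\beta)-w(\alpha)\bigr)=0.
\]
For $\theta\neq0$ the factor $e^{-2\pi{\rm i}\theta}-1$ is nonzero, and rearranging produces precisely the formula in (ii). For $\theta=0$ the second term vanishes identically while the coefficient $\alpha+(1-\beta)$ is strictly positive, forcing $\int_{Q_1}w=0$, which is (iii).

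The argument is essentially bookkeeping: identify the two classes of generators of $V(\theta)$, perform two short cell computations, and split into the cases $\theta\neq0$ and $\theta=0$. The only subtle point is tracking the complex conjugation coming from the sesquilinear inner product, which is why $\exp(-2\pi{\rm i}\theta)$ rather than $\exp(2\pi{\rm i}\theta)$ appears in the formula of (ii).
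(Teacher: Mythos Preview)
Your proof is correct and follows exactly the approach the paper indicates: the paper simply asserts that properties (i)--(iii) ``follow immediately from the characterisation of the space $V(\theta)$ given in the proof of Lemma~\ref{contV}'', and your argument carries out precisely that computation, testing orthogonality against $v\in H^1_0(Q_0)$ to obtain (i) and against $\eta(\theta,\cdot)$ to obtain (ii)--(iii). The bookkeeping with the complex conjugation is handled correctly.
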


We now return to the proof of Proposition \ref{lem1}. We consider three different cases, depending on the location of the quasimomentum within the Floquet-Bloch cell $[0,1).$ Case I: $\theta = 0$. By \eqref{eq1} and Lemma \ref{prop2} (i), (iii), we find that
$$
\vert\vert\vert w\vert\vert\vert^2=\int_{Q_1}\vert w '(y) \vert^2\mathrm{d}y + \frac{\vert w(\beta) - w(\alpha) \vert^2}{\beta-\alpha}.
$$
Since $w(1) = w(0)$, we obtain the estimate
\begin{align*}
\vert\vert\vert w\vert\vert\vert^2 & \le\int_{Q_1}\vert w '(y)\vert^2\mathrm{d}y+\frac{2}{\beta-\alpha}\left(
\vert w(\beta)-w(1)\vert^2+\vert w(0)-w(\alpha)\vert^2\right)\\
& = \int_{Q_1} \vert w '(y) \vert^2\mathrm{d}y + \frac{2}{\beta-\alpha}\left(\left\vert \int^1_\beta w '(y)\mathrm{d}y  \right\vert^2 + \left\vert \int_0^\alpha w '(y)\mathrm{d}y \right\vert^2 \right) \\
& \le \left(1+\frac{2\vert Q_1 \vert}{\beta-\alpha} \right)\int_{Q_1} \vert w '(y) \vert^2\mathrm{d}y.
& 
\end{align*}

Case II: $\theta\in(0, \delta)\cup(1-\delta,1)$, where $0<\delta<1/2$ is to be chosen appropriately. By \eqref{eq1} and 
Lemma \ref{prop2} (i), (ii), we find that
\begin{flalign*}
\vert\vert\vert w\vert\vert\vert^2 & = \left\vert \int_{Q_1} w(y)\mathrm{d}y \right\vert^2 + \int_{Q_1} \left\vert w '(y)
\right\vert^2\mathrm{d}y+\frac{\left\vert w(\beta)-w(\alpha)\right\vert^2}{\beta-\alpha}
\\ & = \left( \frac{1}{\beta-\alpha}+\frac{1}{\vert d_\theta \vert^2} \right)\left\vert w(\beta)-w(\alpha) \right\vert^2 + \int_{Q_1}
\left\vert w'(y)\right\vert^2\mathrm{d}y,
\end{flalign*}
where $d_\theta:=(\beta-\alpha)(1-\exp(-2\pi{\rm i}\theta))^{-1}(\alpha+(1-\beta)\exp(-2\pi{\rm i}\theta))$. 
From the fact that $w(1)=\exp(2\pi{\rm i}\theta)w(0)$ we infer
\begin{flalign*}
\left\vert w(\beta)-w(\alpha)\right\vert^2 & \le3\left(\left\vert w(1) - w(\beta) \right\vert^2 + \left\vert w(\alpha) - w(0) \right\vert^2 + \left\vert w(1)-w(0)\right\vert^2\right)\\
& \le3\left\vert Q_1 \right\vert \int_{Q_1} \vert w'(y)\vert^2\mathrm{d}y+3\left\vert\exp(2\pi{\rm i}\theta)-1\right\vert^2 \vert w(0) \vert^2,
\end{flalign*}
 and therefore 
\[
\vert\vert\vert w\vert\vert\vert^2\le\left(3\vert Q_1\vert+ \frac{1}{\beta-\alpha} +\frac{1}{\vert d_\theta \vert^2} \right)\int_{Q_1} \left\vert w '(y) \right\vert^2\mathrm{d}y\ \ \ \ \ \ \ \ \ \ \ \ \ \ \ \ \ \ \ \ 
\]
\[
\ \ \ \ \ \ \ \ \ \ \ \ \ \ \ \ \ \ \ +3\left(\frac{1}{\beta-\alpha} + \frac{1}{\vert d_\theta \vert^2} \right)\left\vert\exp(2\pi{\rm i}\theta)-1\right\vert^2 \vert w(0)\vert^2.
\]
Notice that $\vert d_\theta \vert^2 =(\beta-\alpha)^2(2-2\cos(2\pi \theta))^{-1}(\alpha^2 + (1-\beta)^2 + 2\alpha(1-\beta)\cos(2\pi\theta))$, hence
$\vert d_\theta \vert$ vanishes at $\theta=1/2$ for the special case $\alpha=1-\beta$. In view of this observation and in order to have a bound on the constant $d_\theta$ we require that $\delta<1/4.$ 
Further, by continuity of the embedding of $H^1(Q)$ in $C(\overline{Q}),$ there exists a constant $\hat{c},$ which is independent of $\theta,$ such that
$$
\bigl\vert w(0)\bigr\vert\le\hat{c}\vert\vert\vert w\vert\vert\vert,
$$
and thus
\[
\vert\vert\vert w\vert\vert\vert^2\le \left(3\vert Q_1\vert+ \frac{1}{\beta-\alpha} + \frac{1}{\vert d_\theta \vert^2} \right)\int_{Q_1} \left\vert w ' (y)\right\vert^2\mathrm{d}y\ \ \ \ \ \ \ \ \ \ \ \ \ \ \ \ \ \ \ 
\]
\beq
\ \ \ \ \ \ \ \ \ \ \ \ \ \ \ \ \ \ \ +3\left(\frac{1}{\beta-\alpha}+\frac{1}{\vert d_\theta \vert^2} \right)\left\vert\exp(2\pi{\rm i}\theta)-1\right\vert^2\hat{c}^2\vert\vert\vert w\vert\vert\vert^2.
\eeq{normwineq}
We now choose $\delta<1/4$ so that  $\left((\beta-\alpha)^{-1}+\vert d_\theta\vert^{-2} \right)\left\vert\exp(2\pi{\rm i}\theta)-1\right\vert^2\hat{c}^2<1/2,$ and hence $\vert d_\theta \vert^{-2}$ is bounded above by a constant independent of $\theta.$ The inequality (\ref{normwineq}) now immediately implies the required estimate.
 
Case III: $\theta\in[\delta, 1-\delta].$  For given $x \in (\beta,1], y \in [0,\alpha)$ we write
\begin{gather*}
w(x) = \int^x_\beta w'(t)\mathrm{d}t + w(\beta), \quad \quad
w(y) = - \int^\alpha_y w'(t)\mathrm{d}t + w(\alpha),
\end{gather*}
which implies, in view of Proposition \ref{prop2} (ii),
\begin{align*}
w(x) - w(y)  & = w(\beta) - w(\alpha) + \left( \int^x_\beta + \int^\alpha_y \right) w'(t)\mathrm{d}t \\
& = d_\theta \int_{Q_1} w(y)\mathrm{d}y + \left( \int^x_\beta + \int^\alpha_y \right) w'(t)\mathrm{d}t.
\end{align*}
In particular, substituting $x=1$, $y = 0$ and using the fact that $w(1) = \exp(2\pi{\rm i}\theta)w(0)$, we obtain
\begin{equation*}
w(1) = \frac{d_\theta}{1 - \exp(-2\pi{\rm i}\theta)}\int_{Q_1} w(y)\mathrm{d}y + \frac{1}{1 - \exp(-2\pi{\rm i}\theta)}\int_{Q_1}w '(y)\mathrm{d}y ,
\end{equation*}
whence
\[
w(x) = - \int^1_x w'(t)\mathrm{d}t + w(1) =   - \int^1_x w'(t)\mathrm{d}t\ \ \ \ \ \ \ \ \ \ \ \ \ \ \ \ \ \ \ \ \ \ \ \ \ \ \ \ \ \ \ \ \ \ \ \ \ \ \ \ \ \ \ \ \ \ \ \ \ \ \ \ \ \ 
\]
\[
\ \ \ \ \ \ \ \ \ \ \ \ \ \ \ \ \ \ \ \ \ \ \ \ \ \ \ \ \ \ \ \ \ \ +\frac{d_\theta}{1 - \exp(-2\pi{\rm i}\theta)}\int_{Q_1} w(y)\mathrm{d}y
+\frac{1}{1 - \exp(-2\pi{\rm i}\theta)}\int_{Q_1} w'(y)\mathrm{d}y.
\]
Integrating the last identity over $(\beta,1]$ yields
\[
\int^1_\beta w(y)\mathrm{d}y = -\int^1_\beta \left( \int^1_x w'(t)\mathrm{d}t\right) \mathrm{d}x\ \ \ \ \ \ \ \ \ \ \ \ \ \ \ \ \ \ \ \ \ \ \ \ \ \ \ \ \ \ \ \ \ \ \ \ \ \ \ \  \ \ \ \ \ \ \ \ \ \ \ \ \ \ \ \ \ \ \ 
\]
\beq
\ \ \ \ \ \ \ \ \ \ \ \ \ \ \ \ \ \ \ \ \ \ \ \ \ \ \ \ +\frac{(1-\beta) d_\theta}{1 - \exp(-2\pi{\rm i}\theta)}\int_{Q_1} w(y)\mathrm{d}y + \frac{1-\beta}{1 - \exp(-2\pi{\rm i}\theta)}\int_{Q_1}w'(y)\mathrm{d}y, 
\eeq{ee1}
Similarly, we write
\begin{align*}
w(y)  & = \int^y_0 w '(t)\mathrm{d}t +\exp(-2\pi{\rm i}\theta)w(1) \\
& = \int^y_0 w '(t)\mathrm{d}t + \frac{\exp(-2\pi{\rm i}\theta)d_\theta}{1 - \exp(-2\pi{\rm i}\theta)}
\int_{Q_1} w(y)\mathrm{d}y + 
\frac{\exp(-2\pi{\rm i}\theta)}{1 - \exp(-2\pi{\rm i}\theta)}\int_{Q_1} w'(y)\mathrm{d}y,  
\end{align*}
which upon integration over $(0,\alpha)$ yields
\[
\int_0^\alpha w(y)\mathrm{d}y=\int_0^\alpha\left( \int^y_0 w'(t)\mathrm{d}t\right)\mathrm{d}y\ \ \ \ \ \ \ \ \ \ \ \ \ \ \ \ \ \ \ \ \ \ \ \ \ \ \ \ \ \ \ \ \ \ \ \ \ \ \ \ \ \ \ \ \ \ \ \ \ \ \ \ \ \ 
\]
\beq
\ \ \ \ \ \ \ \ \ \ \ \ \ +\frac{\alpha\exp(-2\pi{\rm i}\theta) d_\theta}{1-\exp(-2\pi{\rm i}\theta)}\int_{Q_1}w(y)\mathrm{d}y+
\frac{\alpha\exp(-2\pi{\rm i}\theta)}{1-\exp(-2\pi{\rm i}\theta)}\int_{Q_1} w'(y)\mathrm{d}y.
\eeq{ee2}
Combining equations \eqref{ee1} and \eqref{ee2} we obtain
\[
\left(1-\frac{\left(1-\beta+\alpha\exp(-2\pi{\rm i}\theta)\right)d_\theta}{1-\exp(-2\pi{\rm i}\theta)} \right)
\int_{Q_1}w(y)\mathrm{d}y=\int_0^\alpha\left( \int^y_0 w'(t)\mathrm{d}t\right)\mathrm{d}y\ \ \ \ \ \ \ \ \ \ \ \ 
\]
\[
\ \ \ \ \ \ \ \ \ \ \ \ \ -\int^1_\beta\left(\int^1_x w'(t)\mathrm{d}t\right) \mathrm{d}x+\frac{\left(1-\beta+\alpha\exp(-2\pi{\rm i}\theta)\right)}{1-\exp(-2\pi{\rm i}\theta)}\int_{Q_1}w'(y)\mathrm{d}y.
\]
Squaring both sides and using the Cauchy-Schwarz inequality yields 
\[
\left\vert 1-\frac{\left(1-\beta+\alpha\exp(-2\pi{\rm i}\theta) \right)d_\theta}{1-\exp(-2\pi{\rm i}\theta)} \right\vert^2 \left\vert \int_{Q_1} w(y)\mathrm{d}y\right\vert^2\ \ \ \ \ \ \ \ \ \ \ \ \ \ \ \ \ \ \ \ \
\]
\[
\ \ \ \ \ \ \ \ \ \ \ \ \ \ \ \ \ \ \ \ \ \ \ \ \ \ \ \ \  \ \ \le 2\left(4+\frac{\left\vert 1-\beta+\alpha \exp(-2\pi{\rm i}\theta) \right\vert^2}{\vert 1-\exp(-2\pi{\rm i}\theta) \vert^2}  \right) \int_{Q_1} \vert w '(y) \vert^2\mathrm{d}y.
\]
A direct calculation shows that the coefficient in the left-hand side of the last inequality is separated from zero in 
the range of $\theta$ considered. 

Finally, we argue that 
\[
\vert w(\beta)-w(\alpha) \vert \le4\bigl(\vert w(\beta)-w(1)\vert+\vert w(0)-w(\alpha)\vert+\vert w(0)\vert +\vert w(1)\vert\bigr)
\]
\[
\ \ \ \ \ \ \ \ \ \ \ \ \ \ \ \ \ \ \ =4\left(\biggl\vert\int_\beta^1w'(y)\mathrm{d}y\biggr\vert+\biggl\vert\int_0^\alpha 
w'(y)\mathrm{d}y\biggr\vert+\vert w(0)\vert +\vert w(1)\vert\right),
\]
and 
\[
(1-\beta)w(1) =\int_\beta^1\int^1_x w '(t)\mathrm{d}t\mathrm{d}x + \int^1_\beta w(x)\mathrm{d}x,
\]
\[
\alpha w(0) = -\int_0^\alpha\int^x_0 w '(t)\mathrm{d}t\mathrm{d}x+\int_0^\alpha w(x)\mathrm{d}x.
\]
The required inequality follows, since by \eqref{eq1} and Lemma \ref{prop2} (i), 
\[
\vert\vert\vert w\vert\vert\vert=\left\vert \int_{Q_1} w(y)\mathrm{d}y\right\vert^2 + \frac{\left\vert w(\beta) - w(\alpha) \right\vert^2}{\beta-\alpha} + \int_{Q_1} \vert w '(y) \vert^2\mathrm{d}y. 
\]
This completes the proof of Proposition \ref{lem1}.
\end{proof}

We now resume the proof of Theorem \ref{maintheorembounded}.  

Let $N_\varepsilon$ be the smallest integer such that $\overline{\Omega}\subset \varepsilon[a/\varepsilon]+\cup_{n=0}^{N_\varepsilon-1}\ep (n,n+1)=:\tilde{\Omega}^\varepsilon.$ 
For each $\varepsilon,$ we denote $D^\varepsilon:=\varepsilon^{-1}\tilde{\Omega}^\varepsilon$ and use the 
rescaling operator  $T_\varepsilon$ defined by 
$(T_\varepsilon u)(y)=u(\varepsilon y),$ $y\in D^\varepsilon,$ and an extension operator $E_\varepsilon$ from 
${\mathfrak H}$ to  $H^1(\tilde{\Omega}^\varepsilon)$ such that 
\beq
\Vert E(u_\varepsilon)\Vert_{L^2(\tilde{\Omega}^\varepsilon)}\le(1+C_{\rm E}\varepsilon)\Vert u_\varepsilon\Vert_{L^2(\Omega)}\,,\ \ \Vert E(u_\varepsilon)'\Vert_{L^2(\tilde{\Omega}^\varepsilon)}\le(1+C_{\rm E}\varepsilon)\Vert u'_\varepsilon\Vert_{L^2(\Omega)}
\eeq{extest}
for all $\varepsilon>0,$ where the constant $C_{\rm E}>0$ is the same for all $\varepsilon.$ (It is clear, for example, 
that  the continuous extension of $u_\varepsilon$ by constants satisfies this requirement.)
We then define functions $U_\varepsilon$ by the formula 
$U_\ep:=T_{\varepsilon}^{-1}P_{V(D^\varepsilon)} T_\varepsilon E(u_\varepsilon)$ for each value of $\varepsilon.$ 

Extending each of the functions $U_\varepsilon$ to the whole of ${\mathbb R}$ with period $\varepsilon N_\varepsilon,$
we consider the ``discrete Floquet-Bloch transform'' of the functions $U_\varepsilon$ as follows
\[
U^j_\ep(x)=\frac{1}{N_\varepsilon}\sum_{k=0}^{N_\varepsilon-1}U_\varepsilon(x+\varepsilon k)\exp(2\pi{\rm i}jk/N_\varepsilon),\ \ \ \ x\in\tilde{\Omega}_\varepsilon,\ \ \ \ j=0,1,..., N_\varepsilon-1,
\]
so that  $T_\varepsilon U^j_\ep\in V(j/N_\varepsilon)$ and $U_\ep = \sum_{j=0}^{N_\varepsilon-1} U^j_\ep.$ Using the fact that the eigenfunctions $v^k(j/N_\varepsilon),$ $k\in{\mathbb N},$
form a complete system in the $L^2(Q)$-closure of the set $V(j/N_\varepsilon)$, we represent 
$U^j_\varepsilon$ in terms of them so that 
\beq
U_\ep(x) = \sum_{j=0}^{N_\varepsilon-1} \sum_{k=1}^\infty\hat{U}^k_\ep\bigl(\tfrac{j}{N_\varepsilon}\bigr)
v^k\bigl(\tfrac{j}{N_\varepsilon},\tfrac{x}{\ep} \bigr),
\eeq{eq:1}
\beq
U'_\ep(x)=\varepsilon^{-1}\sum_{j=0}^{N_\varepsilon-1} \sum_{k=1}^\infty
\hat{U}^k_\ep\bigl(\tfrac{j}{N_\varepsilon}\bigr)(v^k)'\bigl(\tfrac{j}{N_\varepsilon},\tfrac{x}{\ep}\bigr), 
\eeq{UprimeParseval}
where $\hat{U}^k_\ep(j/N_\varepsilon)\in{\mathbb C}.$ Applying the Parseval identity to (\ref{eq:1}), followed by Proposition \ref{prop0} and the first of the {\it a priori} bounds (\ref{eq:bdspcom3}) yields 
\[
\varepsilon N_\varepsilon\sum_{j = 0}^{N_\varepsilon-1} \sum_{k=1}^\infty \Bigl\vert \hat{U}^k_\ep\bigl(\tfrac{j}{N_\varepsilon}\bigr)\Bigr\vert^2=\norm{U_\ep}{L^2(\tilde{\Omega}_\varepsilon)}^2\ge(1-C_\perp C_{\rm B}\varepsilon)\norm{u_\ep}{L^2(\Omega)}^2.
\]
The last estimate, in combination with the first inequality in (\ref{extest}) and the fact that $\Vert u_\varepsilon\Vert_{L^2(\Omega)}=1,$ implies
\beq
1-C_\perp C_{\rm B}\varepsilon\le\varepsilon N_\varepsilon\sum_{j = 0}^{N_\varepsilon-1} \sum_{k=1}^\infty \Bigl\vert \hat{U}^k_\ep\bigl(\tfrac{j}{N_\varepsilon}\bigr)\Bigr\vert^2\le1+C_{\rm E}\varepsilon
\eeq{eq:2}
Denoting by $\delta(\cdot-\theta)$ the Dirac mass at $\theta$, we infer from (\ref{eq:2}) the existence of $C>0$ such that
\beq
\biggl\vert\sum_{k=1}^\infty \int_0^1\mathrm{d}\mu^k_\ep-1\biggr\vert\le C\varepsilon,\ \ \ \ \ 
{\rm d}\mu^k_\ep(\theta):=\varepsilon N_\varepsilon\sum_{j=0}^{N_\varepsilon-1}\Bigl\vert \hat{U}^k_\ep\bigl(\tfrac{j}{N_\varepsilon}\bigr)\Bigr\vert^2 \delta\bigl(\theta-\tfrac{j}{N_\varepsilon}\bigr){\rm d}\theta.
\eeq{eq:5}
Clearly, for each $k$ the sequence $\{\mu^k_\ep\}_\varepsilon$  is bounded in the space of Radon measures on $[0,1].$ Therefore, up to a subsequence,  $\mu^k_\ep$ weakly converge as $\varepsilon\to0$ to some measure $\mu^k:$  
\begin{equation}
\label{eq:4}
\int_0^1u\ \mathrm{d}\mu^k_\ep \rightarrow \int_0^1u\ \mathrm{d}\mu^k, \quad \forall u \in C[0,1].
\end{equation}
The above result follows from recalling that the space of finite Radon measures on $[0,1]$ coincides with the dual space $C[0,1]^\star$ and hence bounded sets of Radon 
measures  and relatively compact with respect to weak star convergence in this space. 

Furthermore, taking into account (\ref{UprimeParseval}), the second of the estimates (\ref{extest}) and the second of the {\it a priori} bounds (\ref{eq:bdspcom3}), we write
\beq
C_{\rm B}(1+C_{\rm E}\varepsilon)\ge\varepsilon^2\Vert U_\varepsilon'\Vert_{L^2(\tilde{\Omega}_\varepsilon)}^2=
\varepsilon N_\varepsilon\sum_{j=0}^{N_\varepsilon-1} \sum_{k=1}^\infty\Bigl\vert\hat{U}^k_\ep\bigl(\tfrac{j}{N_\varepsilon}\bigr)
\Bigr\vert^2\bigl\Vert(v^k)'\Bigl(\tfrac{j}{N_\varepsilon},\cdot\bigr)\Bigr\Vert^2_{L^2(Q)}
\eeq{Blochest0}
\beq
\ge\Vert p^{-1}\Vert_{L^\infty(Q)}\varepsilon N_\varepsilon\sum_{j=0}^{N_\varepsilon-1}\sum_{k=1}^\infty
\Bigl\vert\hat{U}^k_\ep\bigl(\tfrac{j}{N_\varepsilon}\bigr)
\Bigr\vert^2\lambda_k\bigl(\tfrac{j}{N_\varepsilon}\bigr)
\eeq{Blochest1}
\beq
=\Vert p^{-1}\Vert_{L^\infty(Q)}\sum_{k=1}^\infty\int_0^1\lambda_k(\theta){\rm d}\mu^k_\ep(\theta).
\eeq{Blochest2}
In order to obtain (\ref{Blochest1}), we use the fact that $v^k(\theta),$ $k\in{\mathbb N},$ are 
eigenfunctions of the operator $A(\theta)$ with eigenvalues $\lambda_k(\theta),$ and (\ref{Blochest2}) 
is a version of the same expression written in terms of the measures defined in (\ref{eq:5}). For any integer $K\ge 2$ the inequality (\ref{Blochest0})--(\ref{Blochest2}) immediately implies
\[
\sum_{k=K}^\infty\int_0^1{\rm d}\mu^k_\ep(\theta)\le \Vert p^{-1}\Vert_{L^\infty(Q)}^{-1}C_{\rm B}(1+C_{\rm E}\varepsilon)\min_{\theta\in[0,1)}\lambda_K(\theta)^{-1}\stackrel{K\to\infty}{\longrightarrow}0.
\]
In view of  (\ref{eq:5}) and (\ref{eq:4}) we thus argue that $\sum_{k=1}^\infty\mu^k([0,1])=1,$ 
hence $\mu^k$ is a non-zero measure for some $k.$

We next show that $\lambda = \lambda_{k_0}(\theta)$ for some $k_0,\theta$. 
To this end we set $\varphi=U_\varepsilon$ in the weak formulation \eqref{eq:bdspcom2}:
\begin{equation}
\label{weakutilde}
\int_{\Omega^\ep_1}p\left(\tfrac{x}{\ep} \right)u_\ep'(x)\overline{U_\varepsilon'(x)}\mathrm{d}x+
\ep^2 \int_{\Omega^\ep_0}p\left(\tfrac{x}{\ep} \right)u_\ep'(x)\overline{U_\varepsilon'(x)}\mathrm{d}x=\lambda_\ep \int_{\Omega}u_\ep(x)\overline{U_\varepsilon(x)}\mathrm{d}x. 
\end{equation}
Notice that, by the definition of the space $V(D^\varepsilon),$ the first term in the equation (\ref{weakutilde}) vanishes.
We also have, in view of Proposition \ref{prop0} and the second of the estimates \eqref{eq:bdspcom3}, as 
$\varepsilon\to0:$
\[
\ep^2\int_{\Omega^\ep_0}p\left(\tfrac{x}{\ep}\right)u_\ep'(x)\overline{U_\ep'(x)}\mathrm{d}x=
\ep^2\int_{\Omega^\ep_0}p\left(\tfrac{x}{\ep}\right)U_\ep'(x)\overline{U_\ep'(x)}\mathrm{d}x+o(1)
\]
\[
=\sum_{k=1}^{\infty}\sum_{l=1}^{\infty}\sum_{j=0}^{N_\varepsilon-1}\int_{\ep Q_0}p\left(\tfrac{x}{\ep}\right)\hat{U}^k_\ep\bigl(\tfrac{j}{N_\varepsilon}\bigr) (v^{k})'\bigl(\tfrac{j}{N_\varepsilon},\tfrac{x}{\ep}\bigr)\overline{\hat{U}^l_\ep\bigl(\tfrac{j}{N_\varepsilon} \bigr)(v^{l})'\bigl(\tfrac{j}{N_\varepsilon},\tfrac{x}{\ep}\bigr)}\mathrm{d}x +o(1)
\]
\begin{equation}
\label{eq:bdspcom6}
=\varepsilon N_\varepsilon\sum_{k=1}^{\infty} \sum_{j=0}^{N_\varepsilon-1} \lambda_k\bigl(\tfrac{j}{N_\varepsilon}\bigr)\Bigl\vert\hat{U}^k_\ep\bigl(\tfrac{j}{N_\varepsilon}\bigr)\Bigr\vert^2+o(1)
= \sum_{k=1}^{\infty} \int_0^1\lambda_k(\theta)\mathrm{d}\mu_\ep^k+o(1).
\end{equation}
In the last three equalities of (\ref{eq:bdspcom6}) we use \eqref{eq:1}, \eqref{eq:5} and the fact that $v^k(\theta)$ are orthogonal eigenfunctions of 
$A(\theta)$ with eigenvalues $\lambda_k(\theta)$. Similarly we have
\beq
\int_0^1 u_\varepsilon(x)\overline{U_\varepsilon(x)}\mathrm{d}x=\sum_{k=1}^\infty\mathrm{d}\mu_\varepsilon^k
+o(1),
\eeq{rhslimit}
as $\varepsilon\to0.$
Combining (\ref{weakutilde}), \eqref{eq:bdspcom6}, (\ref{rhslimit}) yields
\begin{equation}
\label{eq:bdspcom11}
\sum_{k=1}^{\infty} \int_0^1\lambda_k(\theta)\mathrm{d}\mu_\ep^k(\theta)
= \lambda_\ep \sum_{k=1}^{\infty} \int_0^1\mathrm{d}\mu_\ep^k +o(1).
\end{equation} 
Finally, passing to the limit $\ep \rightarrow 0$ in \eqref{eq:bdspcom11} and using the fact $\lambda_k$ are continuous functions of  $\theta$ (see Appendix C) yields
$$
\sum_{k=1}^{\infty} \int_0^1\lambda_k(\theta)\mathrm{d}\mu^k(\theta)=\lambda\sum_{k=1}^{\infty} \int_0^1
\mathrm{d}\mu^k.
$$
We already know that $\mu^{k_0}$ is a non-zero measure for some $k_0,$ hence $\lambda = \lambda_{k_0}(\theta)$ for some $\theta$.

\section{A modified problem with a compact perturbation, and the associated defect modes}\label{numericssection}

\subsection{Analytical setup}

In this section we discuss a modified version of the setup of Section \ref{setupsubsection}, as follows.
Consider the operator $\tilde{A}^\varepsilon$ defined by the bilinear form
\beq
(\tilde{A}^\varepsilon u,v)=\int_a^b\Bigl(p_{\rm d}\chi_{\rm d}(x)+p(x/\varepsilon)\bigl(\varepsilon^2\chi_0(x/\varepsilon)+\chi_1(x/\varepsilon)\bigr)\bigl(1-\chi_{\rm d}(x)\bigr)\Bigr)u'(x)v'(x)dx,
\eeq{bilinearformcompact}
$u,v\in {\mathfrak H}.$ Here the space ${\mathfrak H}$ is as before, and $\chi_{\rm d}$ is the indicator function of a ``defect'' interval 
$I_{\rm d}$ whose closure is assumed to be contained in $(a,b),$ and 
$p_{\rm d}$ is the corresponding ``defect'' coefficient (or ``defect strength'').  Analogously to the above, we assume that $a,b,$ and the end-points of $I_{\rm d}$ belong to the set $\varepsilon F_1,$ otherwise we modify (\ref{bilinearformcompact}) 
on those connected components of $\varepsilon F_0$ that contain $a,$ $b$ and the end-points of 
$I_{\rm d}$ by changing the related coefficient from $\varepsilon^2$ to the unity. We denote by $\tilde{S}^\varepsilon$ the spectrum of the operator ${A}^\varepsilon.$

A formal two-scale asymptotic procedure carried out on the equation ({\it cf.} (\ref{ourproblem})) $\tilde{A}^\varepsilon u=\lambda u$
suggests that:

1) The set $\lim_{\varepsilon\to0}\tilde{S}^\varepsilon$ is independent of the choice of the space ${\mathfrak H}$ and is given by the union 
of solutions to the equation (\ref{limiteq}) for all $\theta\in[0,1)$ and a sequence of ``defect eigenvalues''  
$\{p_{\rm d}\pi^2j^2/\vert I_{\rm d}\vert^2\}_{j=1}^\infty;$

2) The ``defect eigenfunctions'' $u_j$ corresponding to the above eigenvalues, $j\in{\mathbb N},$ decay exponentially away from the boundary of the defect: 
\[
\bigl\vert u_j(x)\bigr\vert\le C\exp\bigl(-\varepsilon^{-1}{\rm dist}\{x, I_{\rm d}\}\bigr), \ \ \ \ x\in(a,b)\setminus I_{\rm d},\ \ \ \ C>0.
\]
We next present numerical evidence that supports these claims.

\subsection{Numerical results for the modified problem}

We consider a defect of length $\vert I_{\rm d}\vert=1/2$ and strength $p_{\rm d}=2$ in the middle of the interval $(a,b)=(0,1),$ {\it i.e.} $I_{\rm d}=(1/4, 3/4).$ For each value of $\varepsilon$ such that $N:=\varepsilon^{-1}$ is a positive integer, we describe the intervals $(0,1/4)$ and $(3/4,1)$ on either sides of the defect according to 
the equation (\ref{bilinearformcompact}): each of them consists of $N$ cells of the same length $1/(4N),$ and in one half of each cell the coefficient in the form (the ``modulus'') takes the value
$1/(4N)^2$ while in the other half it is equal to unity. Importantly, we assume that the endpoints both of the 
interval $(a,b)$ and of the defect are belong to the material component where the modulus is equal to unity.



The results of solving the above problems with finite elements are given in Tables \ref{tabledirichlet} 
and \ref{tableneumann}.  The values for the trapped mode are in good agreement with the values obtained by the asymptotic method: $\lambda_\star^{(2)}=78.9568,$ $\lambda_\star^{(3)}=315.8273,$ 
$\lambda_\star^{(5)}=710.6115 ,$ $\lambda_\star^{(6)}=1263.3094,$ where the superscript is the number of 
the stop band containing the related eigenvalue.


In addition, the profiles obtained for such trapped modes (see Figure \ref{crasfig2} for the case of 
periodic boundary conditions) suggest that the 
number of half-oscillations in a trapped mode is equal to the number of the mode in the sequence, 
which resembles the behaviour of the usual Neumann 
eigenfunctions on the defect. We also note that the decay of the trapped modes appears to be
exponential, as can be seen in Figure \ref{crasfig3}: the larger the contrast
(and hence the number of subdivisions of the string) the more localised the mode,
irrespective of the boundary conditions at the endpoints of the string.

\begin{table}[h]
\begin{tabular}{| c | c | c |c | c | c|}
\hline
\multicolumn{6}{| c |}{Dirichlet boundary conditions}\\
\hline $\lambda^{(k,128)}_{\rm min}$  &  $\lambda^{(k,128)}_{\rm max}$ & $\lambda^{(k,128)}_\star$ & $\lambda^{(k,256)}_\star$ & $\lambda^{(k,512)}_\star$ & $\lambda^{(k,1024)}_\star$ \\
\hline
11.7939	&	39.4603	&	- & - &	- & -\\
\hline
65.7875	&	157.8859	&	75.7674 & 77.2502 &	78.0741 & 78.7304\\
\hline
187.6799	&	355.2599	&	293.9534 & 304.1141 &309.7163 & 314.2461\\
\hline
386.1413	&	622.2747	&	- & - & - & -\\
\hline
662.9213	&	986.7685	&	682.6577 & 694.4984 & 702.0486 & 708.4576\\
\hline
1018.4394	&	1421.0468	&	1225.1298 & 1232.2190 & 1243.1182 & 1258.2799\\
\hline
\end{tabular}

\caption{Stop bands and trapped modes for the modified problem with a defect, subject to the Dirichlet boundary conditions:
$\lambda^{(k,128)}_{\rm min}$ and $\lambda^{(k,128)}_{\rm max}$ are the lower and upper bounds of the $k^{th}$ stop band for $N=128$, and
$\lambda^{(k,128)}_\star$, $\lambda^{(k,256)}_\star$, $\lambda^{(k,512)}_\star$,
$\lambda^{(k,1024)}_\star$ are the trapped-mode eigenvalues in the $k^{th}$ stop band, evaluated for $N=128$, $N=256$, $N=512$ and $N=1024$ respectively.
}
\label{tabledirichlet}
\end{table}

\begin{table}[h]
\begin{tabular}{| c | c | c |c | c |c |}
\hline
\multicolumn{6}{| c |}{Neumann boundary conditions}\\
\hline $\lambda^{(k,128)}_{\rm min}$  &  $\lambda^{(k,128)}_{\rm max}$ & $\lambda^{(k,128)}_\star$ & $\lambda^{(k,256)}_\star$ & $\lambda^{(k,512)}_\star$ & $\lambda^{(k,1024)}_\star$\\
\hline
11.7515	&	39.4980	&	- & - &	- & -\\
\hline
65.8359	&	157.8901	&	75.7676 & 77.2509 & 78.0741 & 78.7314 \\
\hline
187.7334	&	355.2765	&	293.9539 & 304.1145 & 309.7164 & 314.3057 \\
\hline
386.2091	&	622.2747	&	- & - & - & - \\
\hline
662.9779	&	986.7698	&	682.6578 & 694.4985 & 702.0486 & 708.6496 \\
\hline
1018.5163	&	1421.0556	&         1225.1298 & 1232.2190 & 1243.1193 & 1258.2626 \\
\hline
\end{tabular}
\caption{Stop bands and trapped modes for the modified problem with a defect, subject to the Neumann boundary conditions:
$\lambda^{(k,128)}_{\rm min}$ and $\lambda^{(k,128)}_{\rm max}$ are the lower and upper 
bounds of the $k^{th}$ stop band for $N=128$, and $\lambda^{(k,128)}_\star$, 
$\lambda^{(k,256)}_\star$, $\lambda^{(k,512)}_\star$,
$\lambda^{(k,1024)}_\star$ are the trapped-mode eigenvalues in the $k^{th}$ stop band, evaluated for 
$N=128$, $N=256$, $N=512$ and $N=1024$ respectively.
}
\label{tableneumann}
\end{table}

\begin{center}
\begin{figure}[h]
\centering
\includegraphics[scale=0.3]{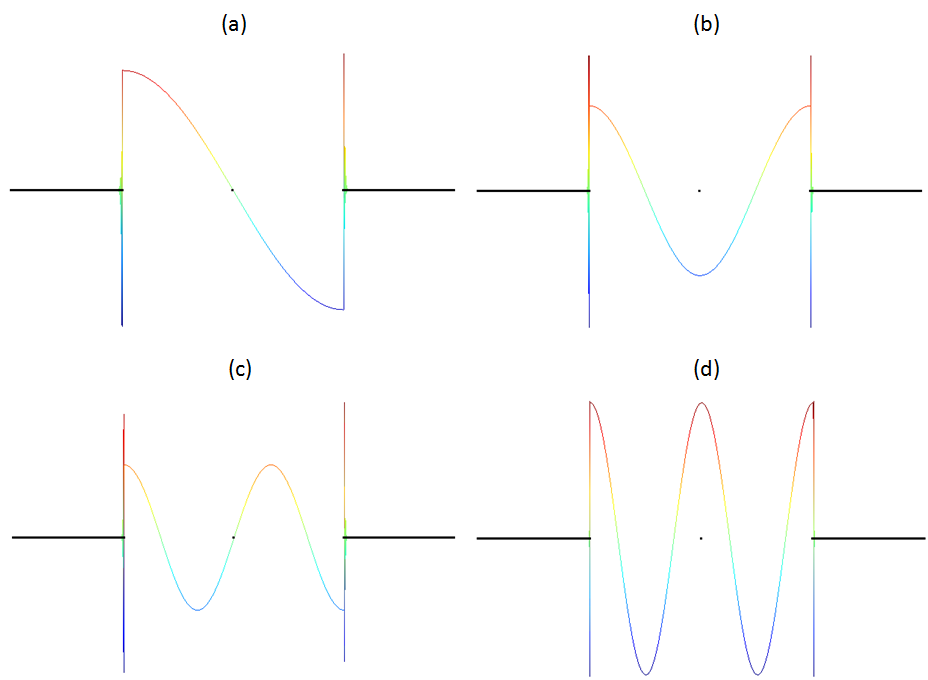}
\caption{Trapped eigenmodes for the modified problem with a defect and periodic boundary conditions
(N=1024), corresponding to eigenfrequencies: (a) $\lambda^{(2,1024)}_\star=78.07,$ 
(b) $\lambda^{(3,1024)}_\star=314.24,$ (c) $\lambda^{(5,1024)}_\star=708.46,$ 
(d) $\lambda^{(6,1024)}_\star=1258.28.$}
\label{crasfig2} 
\end{figure}
\end{center}

\begin{center}
\begin{figure}[h]
\centering
\includegraphics[scale=0.3]{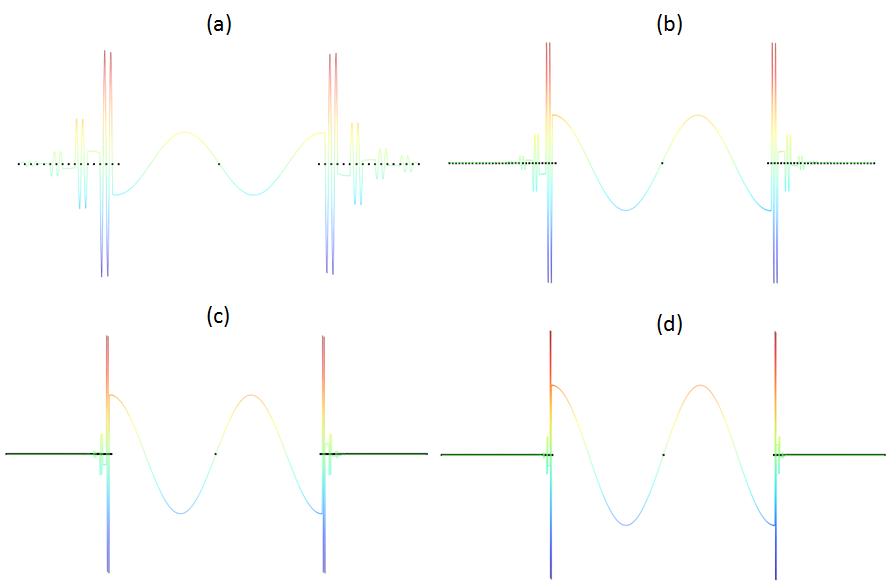}
\caption{Decay of the third trapped eigenmode (located in the fifth stop band) for the modified problem with 
a defect and periodic boundary conditions, as a function of contrast: (a) $\lambda^{(5,32)}_\star=668.86,$ (b) $\lambda^{(5,64)}_\star=682.89,$ (c) $\lambda^{(5,128)}_\star=694.50,$ (d) $\lambda^{(5,256)}_\star=702.68.$
}
\label{crasfig3} 
\end{figure}
\end{center}

\subsection{Photonic band gaps and trapped modes in high-contrast multi-layered dielectric structures}\label{numericssectionb}


The string problem emerges, among other contexts, in the study of wave propagation in 
one-dimensional photonic crystals {\it i.e.} multi-layered dielectric structures invariant 
along two directions. In what follows we set these directions to be $x_1$ and $x_3$ in the usual 
Euclidean representation $x=(x_1, x_2, x_3). $ 

We consider those solutions $({\mathcal E}, {\mathcal H})$ to the 
classical system of Maxwell equations (\cite{Jackson}) that have the form
\[
{\mathcal E}(x_1,x_2,x_3,t)=E(x_2)\exp(i(\kappa x_3-\omega t)),\ \ \ 
{\mathcal H}(x_1,x_2,x_3,t)=H(x_2)\exp(i(\kappa x_3-\omega t)),
\]
where $t$ is time, $\omega$ is the angular frequency, and $\kappa\geq 0$ is a 
``propagation constant''. We write the Maxwell equations for the field variable $(E,H):$
\begin{equation}
\left\{
\begin{array}{cc}
E_3'-{\rm i}\kappa E_2& = {\rm i}\omega\mu H_1\nonumber, \\
{\rm i}\kappa E_1& = {\rm i}\omega \mu H_2 \nonumber, \\
-E_1'& = {\rm i}\omega \mu H_3 \nonumber, \\
\end{array}
\right.
\ \ \ \ 
\left\{
\begin{array}{cc}
-H_3'+{\rm i}\kappa H_2& = {\rm i}\omega\varepsilon E_1\nonumber, \\
-{\rm i}\kappa H_1& = {\rm i}\omega\varepsilon E_2 \nonumber, \\
H_1'& = {\rm i}\omega \varepsilon E_3 \nonumber, \\
\end{array}
\right.
\end{equation}
Here $\mu$ is the magnetic permeability, $\varepsilon$ is the electric 
permittivity at each point of the dielectric.

We rearrange the above six equations in two groups of equations
for $(E_1,H_2,H_3)$ (transverse magnetic polarisation),
and $(H_1,E_2,E_3)$ (transverse electric polarisation).
We choose $E_1$ and $H_1$ as the unknown functions within the respective 
groups and notice that the remaining unknowns are expressed in terms of 
these two scalar functions only. The equations satisfied by $E_1,$ $H_1$ 
are 
\begin{equation}
(E_1')'+\bigl(\omega^2\mu\varepsilon
-\kappa^2\bigr)E_1=0,
\label{pcf2a}
\end{equation}
\begin{equation}
(\varepsilon^{-1}H_1')'+\bigl(\omega^2\mu
-\varepsilon^{-1}\kappa^2\bigr)H_1=0.
\label{pcf2b}
\end{equation}
Note that (\ref{pcf2b}) coincides with (\ref{ourproblem}) when $\kappa=0,$
by setting 
\[
\omega^2=\lambda,\ \ \mu=1,\ \ \varepsilon^{-1}(x_2)=p(x_2/\eta)\bigl(\eta^2\chi_0(x_2/\eta)
+\chi_1(x_2/\eta)\bigr),\ \ \ x_2\in(a,b),\ \ \eta>0,
\]
where we use $\eta$ rather than $\varepsilon$ to denote the structure period,
in order to avoid confusion with the standard notation for electric permittivity. Our analysis in Sections 
\ref{wholespacesection} and \ref{boundedintervalsection} carries 
over to the case $\kappa>0,$ where we get a $\kappa$-dependent version of the dispersion relation
(\ref{limiteq}), as follows:
\[
\frac{1}{2}(\alpha-\beta+1)\biggl(\sqrt{\lambda}-\frac{\kappa^2}{\sqrt{\lambda}}\biggr)
\sin\Bigl(\sqrt{\lambda}(\alpha-\beta)\Bigr)
+\cos\Bigl(\sqrt{\lambda}(\alpha-\beta)\Bigr)=\cos(2\pi\theta).
\]
Assuming infinitely conducting walls 
on either side of the dielectric (see {\it e.g.} \cite{pcfbook} for further details), we   
supply (\ref{pcf2a}) and (\ref{pcf2b}) with homogeneous Dirichlet and Neumann boundary
conditions respectively.

In the numerical solution of above problem we employ finite elements with perfectly 
matched layers, {\it i.e.} anisotropic
absorptive reflectionless layers (see {\it e.g.} \cite{pcfbook}), on the top and bottom of the computational domain.
Our results are shown in Figure \ref{siamfig1} for $\kappa=.1,$ $N=16,$ and for 
the transverse electric mode with frequency $\lambda_\star=78.34$ inside the third stop
band. The latter corresponds to the first trapped mode shown
in Figure \ref{crasfig2}(a), in view of the fact that for $\kappa=.1$ there is an additional 
zero-frequency stop band. 
The magnetic component of this mode (Figures \ref{siamfig1}(a) and (d)) clearly shares the same features
as the string mode in Figure \ref{crasfig2}(a).


\begin{center}
\begin{figure}[h]
\centering
\includegraphics[scale=0.3]{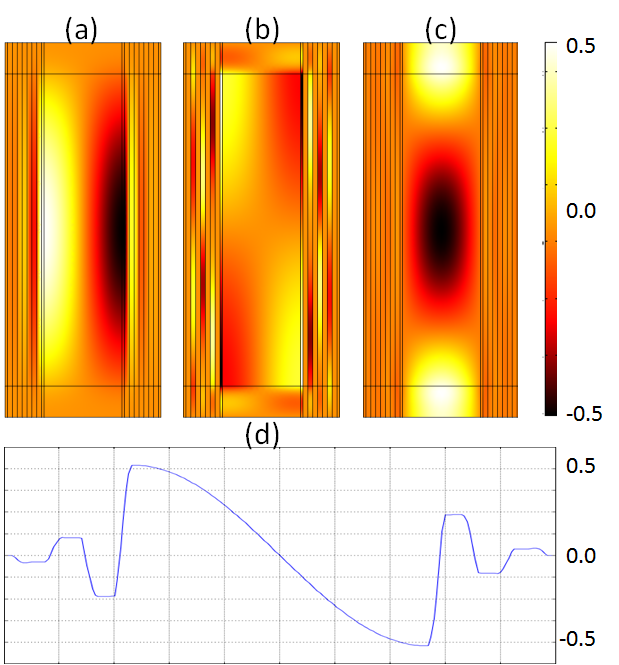}
\caption{Obliquely propagating transverse electric wave in a high-contrast dielectric multilayered planar
waveguide with infinitely conducting walls: (a) 2D plot of $H_1$; (b) 2D plot of $E_2$;
(c) 2D plot of $E_3$; (d) Profile of $H_1$ along the horizontal centreline.
Here $N=16$, $\kappa=.1$ and the normalized frequency $\lambda_\star=78.34$.}
\label{siamfig1} 
\end{figure}
\end{center}


\section*{Appendix A}

In this appendix we argue that for any fixed $N\in{\mathbb N}$ and $\lambda<0,$ say $\lambda=-1,$ the solutions to the problems (\ref{ourproblem}) converge in an appropriate two-scale sense (see {\it e.g.} \cite{Allaire}) to the solution of some limit problem parametrised by $N.$ 
Throughout our argument, we assume that $(a,b)={\mathbb R},$ although the results are valid for an arbitrary interval $(a,b)\subset{\mathbb R},$ irrespective of the boundary  conditions.

We first formulate the related statement for $N=1.$

\subsection{Periodic homogenisation}
\label{subph}

\begin{lemma}
\label{lem:hom1}
Set $\lambda=-1$ and let $u_\ep \in H^1({\mathbb R})$ be the solution to the problem (\ref{ourproblem}).
Then there exists $u(x,y)\in L^2(\Omega;V_1)$ such that
\[
u_\ep   \twoscale u(x,y), \ \ \ \ \ \ \ep u_\ep'\twoscale u_{,y}(x,y),\ \ \ \ \ \ \ \ 
\chi_1\left(\tfrac{x}{\ep}\right) p\left(\tfrac{x}{\ep}\right) u_\ep'(x)  \twoscale 0.
\]
Here the space $V_1$ is defined by 
\beq
V_1:=\left\{v\in H^1_{\#}(Q):p(y)v'(y)=0 {\ {\rm for}\ } y\in Q_1\right\}.
\eeq{V1def}

The function $u\in L^2({\mathbb R};V_1)$ is the unique solution to the problem
\[
\dblint{\mathbb R}{p(y)u_{,y}(x,y)\overline{\varphi'(y)\psi(x)}}
+\dblint{\mathbb R}{u(x,y)\overline{\varphi(y)\psi(x)}}
\]
\beq
= \dblint{\Omega}{f(x) \overline{\varphi(y)\psi(x)}},\quad \forall \varphi\in C^\infty_0(\mathbb R), \psi \in V_1. 
\eeq{eq:qh5}
\end{lemma}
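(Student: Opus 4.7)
The plan is to follow the standard two-scale convergence strategy (see, e.g., \cite{Allaire}) adapted to the high-contrast scaling, with the crucial new step being the identification that the stiff-component flux $\chi_1(\cdot/\ep)p(\cdot/\ep)u_\ep'$ has two-scale limit zero. First I would obtain $\ep$-uniform a priori bounds by testing the weak formulation of (\ref{ourproblem}) with $v = u_\ep$ and $\lambda = -1$, absorbing the right-hand side via Cauchy--Schwarz. Using $p \ge c > 0$, this yields
\[
\norm{u_\ep}{L^2(\mathbb{R})} + \norm{\ep u_\ep'}{L^2(\mathbb{R})} + \norm{\chi_1(\cdot/\ep) u_\ep'}{L^2(\mathbb{R})} \le C.
\]

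By the standard two-scale compactness for sequences with $\ep$-weighted derivatives, up to a subsequence $u_\ep \twoscale u_0(x,y)$ and $\ep u_\ep'(x) \twoscale u_{0,y}(x,y)$ for some $u_0 \in L^2(\mathbb{R}; H^1_{\#}(Q))$. The bound on $\chi_1 u_\ep'$ implies $\ep\chi_1(\cdot/\ep) u_\ep' \to 0$ strongly in $L^2(\mathbb{R})$, and comparison with its two-scale limit $\chi_1(y)u_{0,y}(x,y)$ forces this limit to vanish, placing $u_0 \in L^2(\mathbb{R}; V_1)$.

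The main obstacle is to show $\chi_1(\cdot/\ep)p(\cdot/\ep)u_\ep' \twoscale 0$. Since this sequence is bounded in $L^2(\mathbb{R})$, up to a further subsequence it admits a two-scale limit $\eta(x,y)$ supported on $\mathbb{R} \times Q_1$. To identify $\eta$, I would test the weak formulation with $v_\ep(x) = \ep\, \psi(x)\, \varphi(x/\ep)$, where $\psi \in C_c^\infty(\mathbb{R})$ and $\varphi \in H^1_{\#}(Q)$ is \emph{arbitrary} (with no constraint on $\varphi|_{Q_1}$). The $\ep$ prefactor is calibrated so that the $\ep^{-1}\varphi'$ contribution in $v_\ep'$ produces an $O(1)$ term, while every other term in the weak identity carries an explicit positive power of $\ep$ and vanishes in the limit. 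Passing to the limit yields
\[
\dblint{\mathbb{R}}{\eta(x,y)\, \overline{\psi(x)\varphi'(y)}} = 0 \qquad \forall\, \psi \in C_c^\infty(\mathbb{R}),\ \varphi \in H^1_{\#}(Q),
\]
so $\partial_y \eta = 0$ distributionally on $\mathbb{R} \times Q$ (with $y$-periodicity). Combined with the support property $\eta = 0$ on $\mathbb{R} \times Q_0$, this forces $\eta \equiv 0$.

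With this identification in hand, I would pass to the limit in the weak formulation tested against $v_\ep(x) = \psi(x)\varphi(x/\ep)$ with $\psi \in C_c^\infty(\mathbb{R})$ and $\varphi \in V_1$. The defining property $p(y)\varphi'(y) = 0$ on $Q_1$ annihilates the $\ep^{-1}$-singular contribution; the $\chi_1 p u_\ep'$ term vanishes in the limit by the previous paragraph; the $\ep$-term $\ep \int p\, \chi_0(\cdot/\ep)\, u_\ep'\, \psi\, \varphi'(\cdot/\ep)\, dx$ converges to $\dblint{\mathbb{R}}{p(y)u_{0,y}(x,y)\overline{\psi(x)\varphi'(y)}}$ via $\ep u_\ep' \twoscale u_{0,y}$; and the zero-order terms converge by two-scale convergence of $u_\ep$. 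This produces exactly (\ref{eq:qh5}). Uniqueness of $u_0$ then follows by Lax--Milgram on the Hilbert space $L^2(\mathbb{R}; V_1)$: the shift $-\lambda = 1 > 0$ makes the limit bilinear form coercive, whence $u_0$ is unique and the whole original sequence two-scale converges, completing the proof.
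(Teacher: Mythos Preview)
Your argument is correct and follows the standard two-scale convergence strategy that the paper evidently has in mind: Lemma~\ref{lem:hom1} is stated in Appendix~A without proof, with only a reference to \cite{Allaire} for the general method. Your a~priori bounds, the compactness step, the identification $u_0\in L^2(\mathbb R;V_1)$ via $\ep\chi_1 u_\ep'\to 0$, the oscillating test $v_\ep=\ep\,\psi(x)\varphi(x/\ep)$ to kill the stiff flux $\eta$, and the passage to the limit with $\varphi\in V_1$ are precisely the expected ingredients, and each step is sound. (Note only that the paper's display (\ref{eq:qh5}) appears to swap the roles of $\varphi$ and $\psi$ relative to your convention; your usage is the natural one.)
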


Note that the $x$ dependence in the formulation \eqref{eq:qh5} is trivial, indeed $u(x,y)=f(x)v(y)$ where $v(y)\in V_1$ is the unique solution to the problem 
\begin{equation}
\label{eq:qh6}
\int_{Q_0}p(y)v'(y)\overline{\varphi'(y)}\mathrm{d}y+\int_Q v(y) \overline{\varphi(y)}\mathrm{d}y =\int_Q\overline{\varphi(y)}\mathrm{d}y, \quad \forall 
\varphi\in V_1. 
\end{equation}
This observation implies, in particular, that the spectra corresponding to 
\eqref{eq:qh5} and \eqref{eq:qh6} coincide. Therefore, denoting by $A_1$ the operator defined by the form 
\[
\mathfrak{b}_1(u,v)=\int_{Q_0}p(y)u'(y)\overline{v'(y)}\mathrm{d}y,\ \ \ u,v\in V_1,
\]
we obtain
$\lim_{\ep\rightarrow 0}S^\varepsilon\supset \sigma\left(A_1\right).$

\subsection{``$NQ$-periodic'' homogenisation}

In the argument above we found the two-scale limit operator $A_1$ by choosing the periodic reference cell to be 
$Q$ and passing to the two-scale limit in (\ref{ourproblem}) as $\ep \rightarrow 0.$  Replacing $Q$ with $NQ,$ 
$N\in{\mathbb N},$ we obtain an analogue of Lemma \ref{lem:hom1}, as follows.

\begin{lemma}
\label{lem:hom3}
Set $\lambda=-1$ and let $u_\ep$ be the solution to (\ref{ourproblem}). Then $u_\ep \twoscale u_N$, up to some sequence we do not relabel, where 
$u_N=f(x)v_N(y)$ and $v_N$ is the unique solution to
\[
\int_{NQ}\chi_0(y)p(y)(v_N)'(y)\overline{\varphi'(y)}\mathrm{d}y+\int_{NQ}v_N(y) 
\overline{\varphi(y)}\mathrm{d}y =\int_{NQ}\overline{\varphi(y)}\mathrm{d}y, \quad \forall \varphi \in V_N. 
\]
Here we denote 
\beq
V_N:=\left\{v\in H^1_{\#}(NQ):p(y)v'(y) = 0{\ {\rm for}\ }y \in Q_1\right\}.
\eeq{VNdef}
Furthermore, 
$\lim_{\ep \rightarrow 0} S^\varepsilon\supset \sigma\left(A_N\right),$
where $A_N$ is the operator defined using the bilinear form
\begin{equation}
\mathfrak{b}_N(u,v):=\int_{NQ}\chi_0(y) p(y)u'(y)\overline{v'(y)}\mathrm{d}y,\ \ \ \ \ u,v\in H^1_{\#}(NQ).
\end{equation}
\end{lemma}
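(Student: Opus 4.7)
\medskip\noindent
The proof will mirror the argument sketched for Lemma \ref{lem:hom1}, with $Q$ replaced everywhere by $NQ.$ The crucial observation is that every $\ep$-periodic function is also $N\ep$-periodic, so the family $A^\ep$ may equally be studied via two-scale convergence based on the enlarged reference cell $NQ.$ The extra freedom this provides is precisely what allows the limit problem to see all quasimomenta of the form $j/N,$ $0\le j\le N-1,$ via the decomposition of $NQ$-periodic functions into their Floquet-Bloch components.

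\medskip\noindent
The first step is to obtain the standard a priori bounds: setting $v=u_\ep$ in the weak formulation of (\ref{ourproblem}) with $\lambda=-1$ yields
\[
\Vert u_\ep\Vert_{L^2({\mathbb R})}\le C,\quad\Vert\chi_1(\cdot/\ep)u_\ep'\Vert_{L^2({\mathbb R})}\le C,\quad\ep\Vert\chi_0(\cdot/\ep)u_\ep'\Vert_{L^2({\mathbb R})}\le C.
\]
By the compactness of two-scale convergence with respect to the reference cell $NQ,$ there exist $u_N(x,y)$ and $\xi(x,y)\in L^2({\mathbb R}\times NQ)$ such that, along a subsequence,
\[
u_\ep\twoscale u_N(x,y),\qquad\ep u_\ep'\twoscale\xi(x,y),\qquad\chi_1(x/\ep)p(x/\ep)u_\ep'(x)\twoscale 0,
\]
the last statement being a direct consequence of the second a priori estimate. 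Standard manipulations with test functions $\ep\psi(x)\varphi(x/\ep),$ $\varphi\in C^\infty_{\#}(NQ),$ $\psi\in C^\infty_0({\mathbb R}),$ identify $\xi=(u_N)_{,y}$ in the distributional sense on $NQ,$ so $u_N(x,\cdot)\in H^1_\#(NQ)$ for a.e. $x.$ Combined with the third two-scale statement and the strict positivity of $p,$ this gives $p(y)(u_N)_{,y}(x,y)=0$ on $Q_1,$ i.e. $u_N(x,\cdot)\in V_N.$

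\medskip\noindent
To derive the limit equation, I take test functions of the form $\psi(x)\varphi(x/\ep)$ with $\psi\in C_0^\infty({\mathbb R})$ and $\varphi\in V_N.$ Because $p\varphi'=0$ on $Q_1,$ the term containing $\chi_1$ contributes only cross terms involving $\varphi$ itself rather than $\varphi',$ which are easily controlled; the surviving contribution is the $\ep^2\chi_0$ part of the form. Passing to the two-scale limit then yields
\[
\int_{\mathbb R}\!\!\int_{NQ}\chi_0(y)p(y)(u_N)_{,y}(x,y)\overline{\varphi'(y)\psi(x)}\,dy\,dx+\int_{\mathbb R}\!\!\int_{NQ}u_N(x,y)\overline{\varphi(y)\psi(x)}\,dy\,dx
\]
\[
=\int_{\mathbb R}\!\!\int_{NQ}f(x)\overline{\varphi(y)\psi(x)}\,dy\,dx.
\]
Since the right-hand side separates the $x$ and $y$ dependences, and the bilinear form acts only in $y,$ linearity forces $u_N(x,y)=f(x)v_N(y)$ with $v_N\in V_N$ the unique solution of the stated variational problem; uniqueness follows from coercivity of $\mathfrak{b}_N+\Vert\cdot\Vert_{L^2(NQ)}^2$ on $V_N.$

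\medskip\noindent
Finally, the spectral inclusion $\lim_{\ep\to 0}S^\ep\supset\sigma(A_N)$ follows from the above strong resolvent-type convergence by the Weyl-sequence construction: given $\mu\in\sigma(A_N)$ with eigenfunction $v\in V_N,$ one builds a quasimode $w_\ep(x)=\chi(x)v(x/\ep)$ for $A^\ep$ using a slowly varying cutoff $\chi$ with $\Vert\chi\Vert_{L^2}=1,$ and checks that $\Vert(A^\ep-\mu)w_\ep\Vert/\Vert w_\ep\Vert\to 0.$ The main technical obstacle I expect lies in this last step: the cross terms produced by $\chi'$ interacting with the $\ep^2\chi_0$ part of the coefficient must be shown to vanish in the limit, which is delicate because $(v)'$ is only controlled on $Q_0$ and one must exploit the $\ep^2$ prefactor together with the $NQ$-periodicity of $v$ to absorb the resulting boundary contributions. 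Once this is done, the inclusion $\mu\in\lim_{\ep\to 0}S^\ep$ is immediate from the Weyl criterion for the self-adjoint operators $A^\ep.$
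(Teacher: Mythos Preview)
Your two-scale convergence argument for the first part of the lemma is correct and is exactly the route the paper takes: one simply reruns the standard two-scale argument of Lemma~\ref{lem:hom1} with the reference cell $NQ$ in place of $Q$, obtains $u_N(x,\cdot)\in V_N$, derives the limit variational identity, and then observes that the $x$-dependence factors out as $u_N(x,y)=f(x)v_N(y)$.

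The gap is in your final paragraph, and it is not the obstacle you anticipate. The naive quasimode $w_\ep(x)=\chi(x)v(x/\ep)$ does \emph{not} satisfy $\Vert(A^\ep-\mu)w_\ep\Vert/\Vert w_\ep\Vert\to 0$, and the failure has nothing to do with $\chi'$ interacting with the $\ep^2\chi_0$ part. Since $v\in V_N$ one has $v'=0$ on $Q_1$, so on the stiff region $\ep F_1$ the flux $a_\ep w_\ep'$ is (up to the harmless $\chi'$ term) identically zero, and hence $A^\ep w_\ep\approx 0$ there. But $\mu w_\ep=\mu\chi(x)v(x/\ep)$ is nonzero on $\ep F_1$, and this discrepancy contributes an $O(1)$ amount to $\Vert(A^\ep-\mu)w_\ep\Vert_{L^2}/\Vert w_\ep\Vert_{L^2}$, independent of how slowly $\chi$ varies. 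The point is that the eigenvalue equation for $A_N$ is not a pointwise PDE on all of $NQ$: on each component of $Q_1$ it takes the form of a transmission condition,
\[
p\,v'\bigl((\beta+j-1)^-\bigr)-p\,v'\bigl((\alpha+j)^+\bigr)=\mu\,\vert Q_1\vert\,v\big\vert_{(\beta+j-1,\alpha+j)},
\]
which balances the $Q_1$-part of $\mu(v,\varphi)_{L^2(NQ)}$ against flux jumps at the interfaces. The naive quasimode does not reproduce these jumps, so the balance fails.

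The cure is the argument you allude to but do not carry out, and which is implicit in the paper (and standard since \cite{Zh2000}): argue by contradiction via the resolvent. If $\mu\notin\lim_{\ep\to 0}S^\ep$, then along a subsequence $\Vert(A^\ep-\mu)^{-1}\Vert\le\delta^{-1}$. Take $f_\ep(x)=\psi(x)v(x/\ep)$ with $\psi\in C_0^\infty(\mathbb R)\setminus\{0\}$, so that $f_\ep$ strongly two-scale converges to $\psi(x)v(y)$; set $u_\ep=(A^\ep-\mu)^{-1}f_\ep$. The uniform bound gives the a~priori estimates, and repeating your two-scale limit argument with $\lambda=\mu$ and right-hand side $\psi(x)v(y)$ yields, for a.e.~$x$, the equation $(A_N-\mu)u(x,\cdot)=\psi(x)v$ in $V_N'$. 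But $v\in\ker(A_N-\mu)=\mathrm{Ran}(A_N-\mu)^\perp$, so $\psi(x)\Vert v\Vert_{L^2(NQ)}^2=0$ for a.e.~$x$, a contradiction.
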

Applying Lemma \ref{lem:hom3} for all $N\in \mathbb{N}$ yields 
\[
\lim_{\ep \rightarrow 0}\spec{A_\ep}\supset\bigcup_{N \in\mathbb{N}}\spec{A_N}.
\]

\subsection{Relation to the Bloch spectrum}

Notice that if $\theta=j/ N$ for some integers $N,j$ such that $0 \le j \le N-1$, then all eigenfunctions $v^k(\theta),$ $k=1,2,...,$ are $N$-periodic, in particular $v^k(\theta)\in V_N$. In fact,  $v^k(\theta),$ $k=1,2,...$ are 
eigenfunctions of $A_N.$ 
Indeed, for any fixed $\phi \in V_N$  define the function 
$\Phi(y):=\sum_{k=0}^{N-1} \phi(y+k)\exp(-2\pi{\rm i}\theta k)$
and notice that since $\Phi(y) \in V(\theta),$ one has $(A(\theta)v^k(\theta),\Phi) = \lambda_k(\theta)(v^k(\theta),\Phi)$. Therefore, writing for brevity $v^k(\theta)=v,$ we obtain
\[
\int_{NQ} \chi_0(y)p(y)v'(y) \overline{\varphi(y)} \mathrm{d}y=\sum_{k=0}^{N-1} \int_{Q}\chi_0(y+k)p(y+k)v'(y+k) \overline{\varphi(y+k)}\mathrm{d}y
\]
\[
=\int_{Q}\chi_0(y)p(y)v(y)\overline{\Phi(y)}\mathrm{d}y=
\lambda_k(\theta)\int_{Q}v(y)\overline{\Phi}(y)\mathrm{d}y
\]
\[
=\sum_{k=0}^{N-1}\lambda_k(\theta)\int_{Q}v(y)
\exp(2\pi{\rm i}\theta k)\overline{\phi(y+k)}\mathrm{d}y=\lambda_k(\theta)\int_{NQ}v(y)\overline{\phi(y)}\mathrm{d}y.
\]
The above observations show that
\[
\lim_{\ep \rightarrow 0}S^\varepsilon\supset\bigcup_{N\in{\mathbb N}}\sigma\left(A_N\right)\supset
\bigcup_{\substack{N\in\mathbb{N} \\ 0\le j \le N-1}}\sigma\bigl(A(j/N)\bigr).
\]
Using the facts that the set of rational numbers $j/N$ is dense in $[0,1)$ and 
that the eigenvalues $\lambda=\lambda(\theta)$ are continuous with respect to $\theta$ (see Appendix C below) yields
\[
\lim_{\ep \rightarrow 0}S^\varepsilon\supset\bigcup_{\theta\in[0,1)} \sigma\left(A(\theta)\right).
\]

\section*{Appendix B}


\subsection{One classical inequality}

Here, for reader's convenience, we give the proof of a version of the classical Poincar\'{e} inequality 
(see {\it e.g.} \cite{DuvautLions}), which we use in the present paper. 

\begin{lemma}
\label{prop111}
There exists a positive constant $C_{\rm P},$ which depends on $\alpha$ and $\beta$ only, such that 
\begin{equation}
\label{prop1:eq1}
\int_{Q} \vert u (y)\vert^2\mathrm{d}y \le C_{\rm P}\left(\left\vert \int_{Q_1} u(y)\mathrm{d}y \right\vert^2 + \int_{Q} \vert u'(y) \vert^2\mathrm{d}y \right), \quad \forall u \in H^1(Q).
\end{equation}
\end{lemma}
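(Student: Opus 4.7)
The plan is to argue by a standard contradiction/compactness argument using the Rellich--Kondrachov embedding, which is the natural approach for Poincaré-type inequalities of this form.

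Suppose the claim fails. Then for every $n\in\mathbb{N}$ there exists $u_n\in H^1(Q)$ for which
\[
\int_Q\vert u_n(y)\vert^2\mathrm{d}y > n\left(\left\vert\int_{Q_1}u_n(y)\mathrm{d}y\right\vert^2+\int_Q\vert u_n'(y)\vert^2\mathrm{d}y\right).
\]
Normalising so that $\Vert u_n\Vert_{L^2(Q)}=1$, we obtain a sequence with
\[
\int_{Q_1}u_n(y)\mathrm{d}y\to 0,\quad \int_Q\vert u_n'(y)\vert^2\mathrm{d}y\to 0
\]
as $n\to\infty$. In particular $\{u_n\}$ is bounded in $H^1(Q)$.

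By the compact embedding $H^1(Q)\hookrightarrow L^2(Q)$, I extract a subsequence (not relabelled) such that $u_n\to u_\infty$ strongly in $L^2(Q)$ and $u_n\rightharpoonup u_\infty$ weakly in $H^1(Q)$ for some $u_\infty\in H^1(Q)$. Weak $H^1$-convergence gives $u_n'\rightharpoonup u_\infty'$ in $L^2(Q)$, and since $u_n'\to 0$ strongly, $u_\infty'=0$ a.e., so $u_\infty$ is a constant $c\in\mathbb{C}$. The strong $L^2$-convergence yields $\Vert u_\infty\Vert_{L^2(Q)}=1$, so $|c|=1$; in particular $c\neq 0$.

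On the other hand, the strong $L^2$ convergence permits passing to the limit in $\int_{Q_1}u_n$, giving $c\,\vert Q_1\vert = \int_{Q_1}u_\infty(y)\mathrm{d}y=0$. Since the geometric assumption on the cell (namely $0\le\alpha<\beta\le 1$ with $Q_1$ the interior of the complement of $Q_0$, containing at least one of the intervals $(0,\alpha)$ or $(\beta,1)$ with positive length) ensures $\vert Q_1\vert>0$, we conclude $c=0$, a contradiction. Hence the required $C_{\rm P}=C_{\rm P}(\alpha,\beta)$ exists. The main (and only non-routine) step is the invocation of Rellich compactness to upgrade weak control of the derivative into strong $L^2$-control of the function itself, which is what makes the constant depend only on $\alpha,\beta$ through $\vert Q_1\vert$.
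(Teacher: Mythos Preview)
Your proof is correct. The contradiction/compactness argument via Rellich--Kondrachov is standard and watertight here: boundedness in $H^1(Q)$, compactness in $L^2(Q)$, and the identification of the limit as a nonzero constant annihilated by $\int_{Q_1}$ all go through exactly as you wrote, since $\vert Q_1\vert=1-\beta+\alpha>0$.

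The paper, however, takes a different and entirely constructive route. It starts from the pointwise identity $u(x)-u(y)=\int_y^x u'(t)\,\mathrm{d}t$ for $x\in Q$, $y\in Q_1$, squares and integrates successively in $x$ over $Q$ and in $y$ over $Q_1$, and then splits the resulting cross term $2\Re\bigl[(\int_Q u)\overline{(\int_{Q_1}u)}\bigr]$ by Young's inequality with a well-chosen parameter. This yields the explicit constant $C_{\rm P}=2/\vert Q_1\vert^2$. The gain of the paper's approach is precisely this explicit value, which makes the dependence on $\alpha,\beta$ completely transparent; your argument, by contrast, is non-constructive and gives no quantitative information about $C_{\rm P}$. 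On the other hand, your method is more robust---it extends verbatim to higher dimensions and to more general subdomains $Q_1\subset Q$ of positive measure, whereas the paper's one-dimensional integration trick does not.
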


\begin{proof}
For fixed $x \in Q$, $y \in Q_1$, we have $\vert x - y \vert \le 1$ and
$$
u(x)-u(y) = \int^x_y u '(t)\mathrm{d}t ,
$$
which implies
$$
\vert u(x)\vert^2 + \vert u(y)\vert^2-2\Re(u(x)\overline{u(y)}) = \left\vert \int^x_y u'(t)\mathrm{d}t
\right\vert^2\le \vert x-y \vert \int^x_y \vert u '(t)\vert ^2\mathrm{d}t \le \int_{Q} \vert u '(y)
\vert^2\mathrm{d}y .
$$
Integrating first with respect to $x$, then with respect to $y,$ yields
\[
\vert Q_1 \vert \int_{Q} \vert u(x) \vert^2\mathrm{d}x + \int_{Q_1} \vert u(y) \vert^2\mathrm{d}y\ \ \ \ \ \ \ \ \ \ \ \ \ \ \ \ \ \ \ \ \ \ \ \ \ \ \ \ \ \ \ \ \ \ \ \ \ \ \ \ \  
\]
\[
\ \ \ \ \ \ \ \ \ \ \ \ \ \ \ \ \ \ \ \ \ \ \ \ \ \ \ \ \ \ \ \le 2\Re\left[\left( \int_{Q} u(x)\mathrm{d}x \right)  \overline{\left( \int_{Q_1} u(y)\mathrm{d}y \right)} \right] + \vert Q_1 \vert \int_{Q} \vert u '(y) \vert^2\mathrm{d}y . \label{prop1:eq2}
\]
Here we have used the fact that $\vert Q \vert = 1$. Now using the inequalities \eqref{prop1:eq2},  $ab\le \ep a^2 + b^2/\ep $ for any real $a,b$, $\ep >0$, and
$$
\left\vert \int_{Q_1} u(y)\mathrm{d}y\right\vert^2 \le \vert Q_1 \vert \int_{Q_1} \vert u(y)\vert^2\mathrm{d}y,
$$
we obtain
\begin{multline*}
\vert Q_1 \vert \int_{Q} \vert u(x) \vert^2\mathrm{d}x + \frac{1}{\vert Q_1 \vert}\left\vert \int_{Q_1} u(y)
\mathrm{d}y\right\vert^2  \le \vert Q_1 \vert \int_{Q} \vert u(x) \vert^2\mathrm{d}x + \int_{Q_1} \vert u(y) \vert^2\mathrm{d}y \\
 \le \ep\left( \int_{Q}  \vert u(y) \vert^2\mathrm{d}y \right) + \frac{1}{\ep} \left\vert \int_{Q_1} u(y)\mathrm{d}y \right\vert^2 + \vert Q_1 \vert\int_{Q} \vert u '(y) \vert^2\mathrm{d}y.
\end{multline*}
Setting $\ep = \vert Q_1 \vert/2$ gives
$$
\frac{\vert Q_1 \vert}{2}\int_{Q} \vert u(x) \vert^2\mathrm{d}x \le \frac{1}{\vert Q_1 \vert}\left\vert \int_{Q_1} u(y)\mathrm{d}y \right\vert^2 + \vert Q_1 \vert\int_{Q} \vert u '(y) \vert^2\mathrm{d}y.
$$
This is \eqref{prop1:eq1} for $C_{\rm P}=2 / \vert Q_1 \vert^2$, where $\vert Q_1 \vert=1-\beta+\alpha$.
\end{proof}

\subsection{The equivalence of Proposition \ref{prop0} and Proposition \ref{lem1}}

Suppose that Proposition \ref{lem1} holds and consider a bounded interval $D\subset{\mathbb R}$  and 
a function $u\in V(D)^\perp.$  For the proof of Theorem \ref{maintheorembounded} it is sufficient to consider 
the case when $D$ is an interval of integer length.  However, for completeness we carry out the argument for 
an arbitrary $D.$ 

Clearly, there is a positive integer $N$ and an interval $I:=(l,l+N)\supset\overline{D},$ 
$l\in{\mathbb R}$ such that $\{l,l+1\}\subset D_1.$ We extend the function $u$ to a function that is periodic on the interval $I$ and is such that $u\in V(I)^\perp,$ keeping the same notation $u$ for such an extension. 

Notice that for each $j=0,1,...,N-1$ the function 
\[
u^j(x)=N^{-1}\sum_{k=1}^Nu(x-l+k)\exp(-2\pi{\rm i}jk/N)
\]
belongs to the space $V(\theta_j)^\perp,$ $\theta_j:=j/N.$ Indeed, for any $v\in V(\theta_j)\subset H_{\theta_j}^1(Q)$ one has 
\beq
v(x)=v_{\#}(x)\exp({\rm i}\theta_j x),
\eeq{vper}
where $v_{\#}\in H_{\#}(Q).$ We extend $v_{\#}$ by periodicity to the 
whole of ${\mathbb R}$ and also extend $v$ to the whole of ${\mathbb R}$ so that the formula 
(\ref{vper}) holds for any $x\in{\mathbb R}.$ Then, for the extended function $v,$ one clearly has 
$v(\cdot+l)\in V(I),$ and $v(x-k)=v(x)\exp(-2\pi{\rm i}jk/N)$ for any $x\in{\mathbb R}.$ Therefore, for any 
$j=0,1,..., N-1,$ one has  
\[
\int_Qu^j(x)\overline{v(x)}dx=N^{-1}\sum_{k=0}^{N-1}\int_0^1u(x-l+k)\exp(-2\pi{\rm i}jk/N)\overline{v(x)}dx
\]
\[
=N^{-1}\sum_{k=0}^{N-1}\int_k^{k+1}u(x)\exp(-2\pi{\rm i}jk/N)\overline{v(x-k)}dx
\]
\[
=N^{-1}\int_0^Nu(x-l)\overline{v(x)}dx=N^{-1}\int_I u(x)\overline{v(x+l)}dx=0.
\]

Now, using the Parseval identity and Lemma \ref{prop111}, we obtain  
\[
\Vert u\Vert_{H^1(D)}^2\le\Vert u\Vert_{H^1(I)}^2=N\sum_{j=1}^N\Vert u^j\Vert_{H^1(Q)}^2\le N(C_{\rm P}+1)\sum_{j=1}^N\vert\vert\vert u^j\vert\vert\vert^2
\]
\[
\le(C_{\rm P}+1)\tilde{C}^2N\sum_{j=1}^N\Vert(u^j)'\Vert_{L^2(Q_1)}^2=(C_{\rm P}+1)\tilde{C}^2\Vert u'\Vert_{L^2(I_1)}^2,
\]
where $I_1:=I\cap F_1.$ By ensuring, when performing the above extension from $D$ to $I,$ that the inequality 
$\Vert u'\Vert_{L^2(I_1)}^2\le C\Vert u'\Vert_{L^2(D_1)}^2$ holds with a positive constant $C$ that does not depend on the interval $D,$
we complete the proof of the implication.
 
Conversely, assume the validity of Proposition \ref{prop0} and suppose that $\theta\in[0,1)$ is rational, $N\theta\in{\mathbb N}.$ For any $w\in V(\theta)^\perp,$ we notice 
that 
\beq
w(x)=u(x)\exp({\rm i}\theta x)
\eeq{wuform}
for some $u\in H^1_\theta(Q),$ and extend first the function $u$ by periodicity to the interval $D=NQ$ and then the function 
$w$ according to the formula (\ref{wuform}) to the same interval $D.$ Then using an argument similar to the above it is shown that 
$w\in V(D)^\perp,$ and hence
\[
\Vert w\Vert_{H^1(D)}\le C\Vert w'\Vert_{L^2(D_1)}
\]
for some positive constant $C.$ This automatically implies (\ref{lem1:eq1}) since the norm of $w$ is the same for any interval of length one. By continuity in 
$\theta$ the statement is extended to establish the existence of $C>0$ that serves all $\theta\in[0,1).$

\section{Appendix C}

The continuity of the family $V(\theta)$ implies that for a given $k$ dimensional subspace of $V(\theta)$ we can find, for $\theta'$ close to $\theta$, a $k$ dimensional subspace of $V(\theta')$ close to $V(\theta)$, in the sense of 
\cite{Kato}. More precisely,
\begin{proposition}
\label{cor:contV1}
Let $\theta_1\in[0,1)$ and $F_1 \subset V(\theta)$, $\text{\rm dim}F_1=k.$ For all $\ep>0$ there exists $\delta>0$ such that for all 
$\theta_2,$ $\vert\theta_2-\theta_1\vert\le\delta$, there exists $F_2 \subset V(\theta_2)$, $\text{\rm dim}F_2=k$ such that
$\max\{{\rm dist}(F_1,F_2),{\rm dist}(F_2,F_1)\}<\ep,$
where the distance between two linear subspaces $N,$ $M,$ of $H^1(Q)$, is defined by the formula
$$
{\rm dist}(N,M):=\sup_{\substack{u\in N\\ \norm{u}{H^1}=1}}\inf_{v\in M}\Vert u-v\Vert.
$$
\end{proposition}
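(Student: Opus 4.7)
The plan is to construct $F_2$ by applying Lemma \ref{contV} to the elements of an orthonormal basis of $F_1$, and then verify the two distance estimates via a standard finite-dimensional perturbation argument. First I would fix an orthonormal basis $\phi_1, \ldots, \phi_k$ of $F_1$ with respect to the $H^1(Q)$ inner product. By Lemma \ref{contV}, for each $i$ and any sequence $\theta_2^{(n)} \to \theta_1$ there exist $\phi_i^{(n)} \in V(\theta_2^{(n)})$ with $\phi_i^{(n)} \to \phi_i$ strongly in $H^1(Q)$. In fact the proof of that lemma provides an explicit formula $\phi_i^{(\theta_2)}(y) = \eta(\theta_2, y) c_i + v_i(y)$ depending continuously on $\theta_2$; in particular, for any prescribed $\mu > 0$ there is $\delta > 0$ such that $\Vert \phi_i - \phi_i^{(\theta_2)} \Vert_{H^1(Q)} < \mu$ simultaneously for all $i = 1, \ldots, k$ whenever $\vert \theta_2 - \theta_1 \vert < \delta$. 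I would then define $F_2 := \mathrm{span}\{ \phi_1^{(\theta_2)}, \ldots, \phi_k^{(\theta_2)}\}$.

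Next I would check that $\dim F_2 = k$ for $\theta_2$ sufficiently close to $\theta_1$. The Gram matrix $G_{ij}^{(\theta_2)} := \bigl( \phi_i^{(\theta_2)}, \phi_j^{(\theta_2)} \bigr)_{H^1(Q)}$ converges entrywise to the identity as $\theta_2 \to \theta_1$, so for $\mu$ small enough $G^{(\theta_2)}$ is positive definite with smallest eigenvalue at least $1/2$. This yields both linear independence of $\{\phi_i^{(\theta_2)}\}$ (so $\dim F_2 = k$) and the two-sided coefficient estimate
\[
\tfrac{1}{2}\sum_i \vert c_i \vert^2 \le \Bigl\Vert \sum_i c_i \phi_i^{(\theta_2)} \Bigr\Vert_{H^1(Q)}^2 \le 2\sum_i \vert c_i \vert^2, \qquad (c_1,\ldots,c_k) \in \mathbb{C}^k.
\]

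For the bound on ${\rm dist}(F_1, F_2)$, given a unit vector $u = \sum_i c_i \phi_i \in F_1$ with $\sum \vert c_i \vert^2 = 1$, I set $v := \sum_i c_i \phi_i^{(\theta_2)} \in F_2$ and estimate by Cauchy--Schwarz
\[
\Vert u - v \Vert_{H^1(Q)} \le \sum_i \vert c_i \vert \, \Vert \phi_i - \phi_i^{(\theta_2)} \Vert_{H^1(Q)} \le \sqrt{k}\, \max_i \Vert \phi_i - \phi_i^{(\theta_2)} \Vert_{H^1(Q)} < \sqrt{k}\,\mu.
\]
For ${\rm dist}(F_2, F_1)$, given a unit vector $v = \sum_i d_i \phi_i^{(\theta_2)} \in F_2$ the lower Gram bound forces $\sum \vert d_i \vert^2 \le 2$, and then $u := \sum_i d_i \phi_i \in F_1$ satisfies $\Vert v - u \Vert_{H^1(Q)} \le \sqrt{2k}\,\mu$ by the same Cauchy--Schwarz estimate. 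Choosing $\mu := \ep/\sqrt{2k}$ in the continuity statement from Lemma \ref{contV} completes the proof.

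The main obstacle, mild as it is, is the step requiring quantitative control over the Gram matrix of the approximating basis $\{\phi_i^{(\theta_2)}\}$; without the resulting lower bound on the coefficients $\vert d_i \vert$, one cannot pass from an arbitrary unit vector $v \in F_2$ to a uniform estimate on $\Vert v - u \Vert$ for $u \in F_1$, and hence cannot bound ${\rm dist}(F_2, F_1)$. Everything else is a routine consequence of the continuous dependence on $\theta$ of the quasiperiodic extension $\eta(\theta, \cdot)$ exploited in the proof of Lemma \ref{contV}.
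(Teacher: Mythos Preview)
Your proof is correct and follows essentially the same route as the paper: both use Lemma~\ref{contV} to perturb a basis of $F_1$ into $V(\theta_2)$, take $F_2$ to be the span of the perturbed vectors, and then transfer coefficients back and forth to estimate the two distances. The only cosmetic difference is that the paper applies a Gram--Schmidt step to the approximating vectors (so that the new basis is orthogonal and the coefficient bounds are automatic), whereas you keep the raw approximants and control their Gram matrix directly to obtain the two-sided coefficient estimate; your version is arguably more explicit in the reverse direction ${\rm dist}(F_2,F_1)$, which the paper dispatches with the phrase ``reversing the roles of $u$ and $v$''.
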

Henceforth within this appendix, we write $H^1$ instead of $H^1(Q)$ for brevity.

\begin{proof}
Let $F_1 \subset V(\theta_1), \text{dim}F_1 = k,$ and let $f^1_n$, $n=1,\ldots,k,$ be a basis of $F_1$. Lemma \ref{contV} implies that for any  $\theta_2$ that is sufficiently close to $\theta_1,$ there exist $v_n \in V(\theta_2)$ that are close to
 $f^1_n$ in the $H^1$-norm. We construct an orthonormal sequence as follows:
\[
f^2_1 = v_1,\ \ f^2_2=a_{21} f^2_1+v_2,\ \ f^2_n = \sum_{m=1}^{n-1} a_{nm} f^2_m + v_n,\ n=3,\ldots,k,
\]
where $a_{nm}=-\norm{f^2_m}{H^1}^{-1}(v_n , f^2_m)_{H^1}.$
By construction, $f^2_1$ is close to $f^1_1$. Notice further that $a_{21} =\norm{v_1}{H^1}^{-1}(v_2,v_1)_{H^1}$ 
can be made as small as necessary if $\theta_2$ is sufficiently close to $\theta_1$. This implies that 
$f^2_2$ is close to $f^1_2$. By induction we show that $f^2_n$ is close to $f^1_n$ for all $n=1,2,...,k.$

Defining $F_2 = \text{span}\left\{ f^2_n : n=1,\ldots, k \right\}$, we argue that $F_2$ satisfies the properties of the proposition. Indeed, by construction,  $F_2 \subset V(\theta_2)$, dim$F_2=k$. Further, for fixed $u\in F_1$ one has $u = \sum_{n=1}^k b_n f^1_n$, for some $b_n \in \mathbb{C}$. The function $v= \sum_{n=1}^k b_n f^2_n$ belongs to $F_2$ and is close to $u$ if $\theta_2$ is sufficiently close to $\theta_1$. It follows that $\text{dist}(F_1, F_2)<\varepsilon$  when  $\vert\theta_2-\theta_1\vert<\delta$ for sufficiently small $\delta.$ 
Reversing the roles of $u$ and $v$ completes the proof. \end{proof}

\begin{theorem}
The eigenvalues $\lambda_k(\theta),$ $k\in{\mathbb N}$  are continuous functions of  $\theta\in[0,1).$
\end{theorem}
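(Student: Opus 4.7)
The plan is to establish continuity of $\lambda_k(\theta)$ at any fixed $\theta_0\in[0,1)$ via the Courant--Fischer min-max characterisation
\[
\lambda_k(\theta)=\min_{\substack{F\subset V(\theta)\\ \dim F=k}}\ \max_{\substack{u\in F\\u\neq 0}}\ \frac{\int_{Q_0}p(y)|u'(y)|^2\mathrm{d}y}{\int_Q|u(y)|^2\mathrm{d}y},
\]
separately proving upper and lower semi-continuity of $\lambda_k$.

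For upper semi-continuity, I would fix an optimal $k$-dimensional subspace $F_0\subset V(\theta_0)$ spanned by an $L^2$-orthonormal basis of the first $k$ eigenfunctions, and apply Proposition \ref{cor:contV1} to produce, for $\theta$ near $\theta_0$, $k$-dimensional subspaces $F_\theta\subset V(\theta)$ with $\mathrm{dist}(F_0,F_\theta)\to 0$. Choosing bases $\{\tilde u_\theta^j\}_{j=1}^k$ of $F_\theta$ that converge in $H^1(Q)$ to the basis of $F_0$ (this is built into the construction in the proof of Proposition \ref{cor:contV1}) and parametrising elements of $F_\theta$ by coefficient vectors $c\in\mathbb{C}^k$, the Rayleigh quotient $R_\theta(c)$ converges uniformly to $R_0(c)$ on the compact unit sphere $\{c:\sum|c_j|^2=1\}$, since all norms on the finite-dimensional span are equivalent. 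Hence the min-max principle yields $\lambda_k(\theta)\leq\max_{u\in F_\theta\setminus\{0\}}R_\theta(u)\to\lambda_k(\theta_0)$.

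For lower semi-continuity, I would argue by weak compactness. Suppose $\theta_n\to\theta_0$ with $\lambda_k(\theta_n)\to L$. Let $u_n^1,\ldots,u_n^k$ be $L^2(Q)$-orthonormal eigenfunctions of $A(\theta_n)$ for the first $k$ eigenvalues. The form bound $\int_{Q_0}p|(u_n^j)'|^2=\lambda_j(\theta_n)\leq\lambda_k(\theta_n)$, combined with $(u_n^j)'=0$ on $Q_1$ (since $u_n^j\in V(\theta_n)$), yields uniform $H^1(Q)$-bounds. Extract a subsequence with $u_n^j\rightharpoonup u^j$ in $H^1(Q)$, strongly in $L^2(Q)$ and in trace. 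Then $u^j\in V(\theta_0)$: weak $H^1$ convergence preserves $(u^j)'=0$ on $Q_1$, while the trace convergence together with $\theta_n\to\theta_0$ forces $\theta_0$-quasiperiodicity. Strong $L^2$ convergence preserves orthonormality, so $\{u^j\}_{j=1}^k$ span a $k$-dimensional subspace $F_0\subset V(\theta_0)$. For any $u=\sum c_j u^j$ with $\sum|c_j|^2=1$, setting $u_n:=\sum c_j u_n^j$ and using the energy-orthogonality of eigenfunctions with distinct eigenvalues (supplemented by orthogonalisation within eigenspaces) yields
\[
\int_{Q_0}p|u'|^2\leq\liminf_n\int_{Q_0}p|u_n'|^2=\liminf_n\sum_{j=1}^k|c_j|^2\lambda_j(\theta_n)\leq L,
\]
since $\lambda_j(\theta_n)\leq\lambda_k(\theta_n)\to L$. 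The min-max principle then gives $\lambda_k(\theta_0)\leq\max_{u\in F_0,\|u\|_{L^2}=1}\int_{Q_0}p|u'|^2\leq L$, so $\lambda_k(\theta_0)\leq\liminf_n\lambda_k(\theta_n)$.

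The main obstacle will be the lower semi-continuity step. It requires handling the first $k$ eigenfunctions \emph{simultaneously}, verifying that the weak $H^1$ limits remain linearly independent (which Rellich compactness and $L^2$-orthonormality do supply), confirming the limits lie in the correct space $V(\theta_0)$ despite the fact that the quasiperiodicity parameter is moving with $n$, and converting the weak lower semi-continuity of the Dirichlet form on $Q_0$ into a uniform bound on the Rayleigh quotient over all of $F_0$. A minor subtlety worth addressing is the case $\theta=0$, where constants lie in $V(0)$ and the operator $A(0)$ acts non-trivially on $V(0)\ominus\mathbb{C}$; here the same min-max argument applies provided one interprets $\lambda_1(0)=0$ as the eigenvalue associated with constants, and checks that a sequence $\theta_n\to 0$ produces a limiting $u^1$ that is either constant or orthogonal to constants as appropriate.
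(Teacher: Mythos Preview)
Your proposal is correct. The upper semi-continuity half is essentially the paper's argument: take an optimal $k$-dimensional subspace of $V(\theta_0)$, transport it to $V(\theta)$ via Proposition~\ref{cor:contV1}, and compare Rayleigh quotients on a compact coefficient sphere.

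For lower semi-continuity you take a genuinely different route. The paper simply \emph{reverses the roles} of $\theta_1$ and $\theta_2$, again invoking Proposition~\ref{cor:contV1}: given the optimal subspace $F_2\subset V(\theta_2)$ one finds a nearby $F_1\subset V(\theta_1)$ and repeats the Rayleigh-quotient comparison, obtaining $\lambda_k(\theta_1)\le\lambda_k(\theta_2)+\ep$. Your argument instead extracts weakly convergent subsequences of the first $k$ eigenfunctions, identifies the limits as an $L^2$-orthonormal system in $V(\theta_0)$ (using compact embedding $H^1(Q)\hookrightarrow C(\overline{Q})$ for the quasiperiodicity condition), and applies weak lower semi-continuity of the Dirichlet form together with energy-orthogonality. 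Both arguments are sound. The paper's symmetric approach is shorter and avoids any discussion of eigenfunction compactness, but it tacitly relies on the fact that the $\delta$ in Proposition~\ref{cor:contV1} can be taken uniformly in the base point (which follows from the explicit construction in Lemma~\ref{contV}); your compactness route sidesteps that uniformity issue entirely and is closer in spirit to the eigenfunction-convergence arguments used elsewhere in the paper (e.g.\ Theorem~\ref{spcompthm}).
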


\begin{proof}
By a variational argument it is known that (see {\it e.g.} \cite{ReedSimon})
\begin{equation}
\label{eq:cV1}
\lambda_k(\theta) = \inf_{\substack{F \subset V(\theta) \\ \text{\rm dim}F = k }} \sup_{u \in F }\frac{\bigl(A(\theta)u,u\bigr)}{\Vert u\Vert_{L^2(Q)}^2},
\end{equation}
Let  $F_1 \subset V(\theta_1)$, $\text{\rm dim}F_1=k$. For a fixed $\ep >0$, let $\delta$, $\theta_2$ and $F_2 \subset V(\theta_2)$ be given by Proposition \ref{cor:contV1}. For a fixed $f_2 \in F_2$, there exists $f_1 \in F_1$ such that $\norm{f_1 - f_2}{H^1} \le \ep$. This implies
$$
\frac{\bigl(A(\theta_2)f_2,f_2\bigr)}{\norm{f_2}{L^2(Q)}^2}\le \frac{\bigl(A(\theta_1)f_1,f_1\bigr)}{\norm{f_1}{L^2(Q)}^2} + \ep \le\sup_{u\in F_1}\frac{\bigl(A(\theta_1)u,u\bigr)}{\norm{u}{L^2(Q)}^2}  + \ep.
$$
It follows, by the arbitrary choice of $f_2$, that
$$
\sup_{u\in F_2}\frac{\bigl(A(\theta_2)u,u\bigr)}{\norm{u}{L^2(Q)}^2}\le\sup_{u\in F_1}
\frac{\bigl(A(\theta_1)u,u\bigr)}{\Vert u\Vert_{L^2(Q)}^2}+\ep.
$$
Therefore, by the fact that $F_1$ is arbitrary and in view of  the equation \eqref{eq:cV1}, we obtain
\begin{align}
\inf_{\substack{F_2}} \sup_{u \in F_2 }\frac{\bigl(A(\theta_2)u,u\bigr)}{\norm{u}{L^2(Q)}^2}
 & \le\inf_{\substack{F_1 \subset V(\theta_1) \\ \text{dim}F_1=k}}\sup_{u\in F_1}\frac{\bigl(A(\theta_1)u,u\bigr)}{\norm{u}{L^2(Q)}^2} +\ep \nonumber \\
 & = \lambda_k(\theta)+\ep.\label{eq:cV2}
\end{align}
The set $\{F_2\}$ is a subset of the set of all $k$ dimensional subspaces of $V(\theta_2),$ therefore
\begin{equation}
\label{eq:cV4}
\lambda_k(\theta_2) =\inf_{\substack{F \subset V(\theta_2) \\ \text{\rm dim}F = k }}
\sup_{u\in F}\frac{\bigl(A(\theta)u,u\bigr)}{\norm{u}{L^2(Q)}^2}\le
\inf_{\substack{F_2}}\sup_{u\in F_2}\frac{\bigl(A(\theta_2)u,u\bigr)}{\norm{u}{L^2(Q)}^2}.
\end{equation}
Equations \eqref{eq:cV2} and \eqref{eq:cV4} imply
$$
\lambda_k(\theta_2)-\lambda_k(\theta_1)\le\ep.
$$
Reversing the roles of $\theta_1$ and $\theta_2$ yields the desired result.
\end{proof}

The above statement of continuity of $\lambda_k=\lambda_k(\theta),$ $k\in{\mathbb N},$ is not a 
simple consequence of the continuity of eigenvalues for the usual Floquet-Bloch decomposition. 
An important distinct feature of the present statement is the dependence on $\theta$ of the operator 
domain $V(\theta)$ in the variational principle (\ref{eq:cV1}).


\begin{thebibliography}{99}
\bibitem{Allaire}
Allaire, G., 1992.
Homogenization and two-scale convergence, {\it SIAM J. Math. Anal.} {\bf 23}, 1482--1518.
\bibitem{AllaireConca}
Allaire, G., Conca C. 1998.
Bloch wave homogenization and spectral asymptotic analysis, {\it J. Math. Pures. Appl.} {\bf 77}, 153--208.
\bibitem{BLP}
Bensoussan, A.,  Lions, J.-L., and Papanicolaou, G., 1978. {\it Asymptotic Analysis for Periodic Structures,} North-Holland.
\bibitem{ChanWildeLindner}
Chandler-Wilde, S., Lindner, M., 2011. {\it Limit Operators, Collective Compactness, and the Spectral Theory of Infinite Matrices}, American Mathematical Society.
\bibitem{Cooper}
Cooper, S. 2012.
Two-scale homogenisation of partially degenerating PDEs with applications to photonic crystals and elasticity, {\it PhD Thesis,} University of Bath.
\bibitem{DuvautLions}
Duvaut, G., Lions, J.-L., 1972. {\it Les In\'{e}quations en M\'{e}canique et en Physique,} Dunod, Paris.
\bibitem{FJM}
Friesecke, G., James, R.D., M\"{u}ller, S., 2002. A theorem on geometric rigidity and the derivation of nonlinear plate theory from three dimensional elasticity. {\it Comm. Pure Appl. Math.} {\bf 55} 1461--1506.
\bibitem{Mulleretal}
Fonseca, I., Leoni, G., M\"{u}ller, S., 2004. ${\mathcal A}$-quasiconvexity: weak convergence and the gap, {\it Ann. Inst. H. Poincar\'{e} Anal. Non Lin\'{e}are} {\bf 21}(2), 209--236. 
\bibitem{Jackson}
Jackson, J. D., 1998. {\it Classical Electrodynamics}, John Wiley \& Sons.
\bibitem{SmKam}
Kamotski, I. V., Smyshlyaev, V. P., 2011. Homogenisation of degenerate PDEs and applications to localisation of 
waves, {\it Preprint,} University College London.
\bibitem{Kato}
Kato, T., 1995.
{\it Perturbation Theory for Linear Operators,} Springer.
\bibitem{MuratTartar}
Murat, F., 1978. Compacit\'{e} par compensation, {\it Ann. Scuola Norm. Sup. Pisa Cl. Sci. (4)} {\bf 5}, 489--507.
\bibitem{Nguetseng}
Nguetseng, G., 1989. A general convergence result for a functional related to the theory of homogenisation, 
{\it SIAM J.  Math. Anal.} {\bf 20}, 608--623.
\bibitem{RamakrishnaGrzegorczyk}
Ramakrishna, S. A., Grzegorczyk, T.M., 2008. {\it Physics and Applications of Negative Refractive Index Materials,} CRC Press and SPIE Press.

\bibitem{ReedSimon}
Reed, M., Simon, B., 1978. {\it Methods of Modern Mathematical Physics IV: Analysis of Operators,} Academic Press.
\bibitem{Sm2009}
Smyshlyaev, V. P., 2009.
Propagation and localisation of elastic waves in highly anisotropic periodic composites via 
two-scale homogenisation. 
{\it Mechanics of Materials} {\bf 41} (2009), 434--447.
\bibitem{Zh2000}
Zhikov, V. V., 2000. On an extension of the method of two-scale convergence and its applications, {\it Sb. Math.},  {\bf 191}(7), 973--1014.
\bibitem{Zh_sing}
Zhikov, V. V., 2002. Homogenisation of elasticity problems on singular structures, {\it Izvestiya RAN, Ser. Math.} 
{\bf 66} (2), 81--148.
\bibitem{Zh2005}
Zhikov, V. V., 2005. On spectrum gaps of some divergent elliptic operators with periodic coefficients. {\it St.\,Petersburg Math. J.} {\bf 16}(5) (2005), 774--790.
\bibitem{pcfbook}
Zolla, F., Renversez, G., Nicolet, A., Kuhlmey, B., Guenneau, S., Felbacq, D., Argyros, A., Leon-Saval, S., 2012.
{\it Foundations of Photonic Crystal Fibres,} Imperial College Press.
\end{thebibliography}
\end{document}